\theoremstyle{plain}
\newtheorem{theorem}{Theorem}[section]
\newtheorem{lemma}[theorem]{Lemma}
\newtheorem{corollary}[theorem]{Corollary}
\newtheorem{definition}[theorem]{Definition}
\newcommand{\Z}{\mathbb{Z}}
\newcommand{\bnum}{\begin{enumerate}}
\newcommand{\enum}{\end{enumerate}}
\numberwithin{equation}{section}
\begin{document}

\title{\textbf{On commuting probability of finite rings}}
\author{Jutirekha Dutta, Dhiren Kumar Basnet and Rajat Kanti Nath\footnote{Corresponding author}}
\date{}
\maketitle
\begin{center}\small{\it 
Department of Mathematical Sciences, Tezpur University,\\ Napaam-784028, Sonitpur, Assam, India.\\

%Department of Mathematical Sciences,   Tezpur University,\\ Napaam-784028, Sonitpur, Assam, India.\\

%Department of Mathematical Sciences,   Tezpur University,\\ Napaam-784028, Sonitpur, Assam, India.\\

Emails:\, jutirekhadutta@yahoo.com,  dbasnet@tezu.ernet.in and rajatkantinath@yahoo.com}
\end{center}

\medskip

\begin{abstract} 
The commuting probability of a finite ring $R$, denoted by $\Pr(R)$, is the probability that any two randomly chosen elements of $R$ commute. In this paper, we obtain several bounds for $\Pr(R)$ through a generalization of $\Pr(R)$. Further, we define ${\Z}$-isoclinism between two pairs of rings and show that the generalized commuting probability, defined in this paper, is invariant under ${\Z}$-isoclinism between two pairs of finite rings.  
\end{abstract}

\medskip

\noindent {\small{\textit{Key words:}  finite ring, commuting probability, isoclinism of rings.}}  
 
\noindent {\small{\textit{2010 Mathematics Subject Classification:} 
16U70, 16U80.}} 

\medskip

\section{Introduction}

Throughout this paper $S$ denotes a subring of a finite ring  $R$.
We define $Z(S, R) = \{s \in S : sr = rs \; \forall \; r \in R\}$. Note that $Z(R, R) = Z(R)$, the center of $R$, and   $Z(S, R) = Z(R) \cap S$.
For any two elements $s$ and $r$ of a ring $R$, we write $[s, r]$ to denote the additive commutator of $s$ and $r$. That is, $[s, r] = sr - rs$. By $K(S, R)$ we denote the set $\{[s, r] : s \in S, r \in R\}$ and $[S, R]$ denotes the subgroup of $(R, +)$ generated by $K(S, R)$. Note that $[R, R]$ is the commutator subgroup of $(R, +)$ (see \cite{BMS}). Also, for any $x \in R$, we write $[x, R]$ to denote the subgroup of $(R, +)$ consisting of all elements of the form $[x, y]$ where $y \in R$.

The commuting probability of $R$, denoted by $\Pr(R)$, is the probability that a randomly chosen pair of elements of $R$ commute. That is 
\[
\Pr(R) = \frac{|\{(s, r) \in R \times R : sr = rs\}|}{|R \times R|}.
\]
The study of commuting probability of a finite ring was initiated  by MacHale \cite{dmachale} in the year 1976. Many papers have been written  on commuting probability of finite groups in the last few decades, for example see \cite{DasNath11,GR06,pL95,pL01,dmachale74,ND10,dR79} starting from the works of Erd$\ddot{\rm o}$s and Tur$\acute{\rm a}$n \cite{pEpT68}. However, the study of the commuting probability of a finite ring was neglected. After many years, in the year 2013, MacHale resumes the study of commuting probability of   finite rings together with Buckley and N$\acute{\rm i}$ Sh$\acute{\rm e}$ (see \cite{BM, BMS}).
%    The only papers, on this topic, available in the literature are \cite{BM, BMS} after \cite{dmachale}.
% More results on this topic can be found in \cite{BM, BMS}. 
In this paper, we obtain several bounds for $\Pr(R)$ through a generalization of $\Pr(R)$.  Motivated by \cite{erl,STM10,DN10} and \cite{nY15}, we generalize $\Pr(R)$ as the following ratio 
% The main object of this paper is to generalized the notion of commuting probability of a finite ring and 
\begin{equation}\label{formula-001}
\Pr(S, R) = \dfrac{|\{(s, r) \in S \times R : sr = rs\}|}{|S \times R|}
\end{equation}
where $S$ is a subring of a finite ring $R$. Note that $\Pr(S, R)$ is the   
probability that a randomly chosen pair of elements, one from the subring $S$ and the other from $R$, commute.  
%In particular, we study the probability that a randomly chosen pair of elements, one from the subring $S$ and the other from $R$, commute.   More precisely, we study the ratio  
We call  $\Pr(S, R)$   the  relative commuting probability of the subring $S$ in the ring $R$. It is clear that  $\Pr(R, R) = \Pr(R)$ and $\Pr(S, R) = 1$ if and only if $Z(S, R) = S$. In Section 3 of this paper, we define ${\Z}$-isoclinism between two pairs of rings and show that  
$\Pr(S_1, R_1) = \Pr(S_2, R_2)$ if $(S_1, R_1)$ is ${\Z}$-isoclinic to $(S_2, R_2)$ where $S_1$ and $S_2$ are subrings of the rings $R_1$ and $R_2$ respectively.

   In this paper, we write   $R/S$ or $\frac{R}{S}$  to denote the additive quotient group, for any subring $S$ of $R$,  and $|R : S|$ to denote the index of    $(S, +)$ in   $(R, +)$. Further, if $S$ is  an ideal of $R$ then we also write  $R/S$ or $\frac{R}{S}$  to denote the quotient ring.    The isomorphisms considered are the additive group isomorphisms.   We shall also use the fact that for any non-commutative ring $R$, the additive group $\frac{R}{Z(R)}$ is not a cyclic group (see \cite[Lemma 1]{dmachale}).

\section{Some bounds}

Let $S$ be a subring of a ring $R$ and $r \in R$. We define a subring of $S$ given by  $C_S(r) = \{s \in S : sr = rs\}$. Then, from \eqref{formula-001}, it follows that
\begin{equation}\label{formula-1}
\Pr(S, R) = \frac{1}{|S||R|} \underset{s \in S}{\sum}|C_R(s)| = \frac{1}{|S||R|} \underset{r \in R}{\sum}|C_S(r)|.
\end{equation}
We also have the following lemma, which gives a relation between 
$|S : C_S(r)|$ and $|R : C_R(r)|$.  
\begin{lemma}\label{lemma1}
Let $S$ be a subring of a ring $R$ and $r \in R$. Then 
\[
|S : C_S(r)| \leq |R : C_R(r)|.
\]
The equality holds if and only if $S + C_R(r) = R$.
\end{lemma}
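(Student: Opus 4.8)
The plan is to carry everything over to the additive group $(R,+)$ and to recognise both sides of the inequality as indices of abelian subgroups, after which the statement follows from the second isomorphism theorem together with Lagrange's theorem. Throughout, $(S,+)$, $(C_S(r),+)$ and $(C_R(r),+)$ are subgroups of $(R,+)$.

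First I would record the elementary identity $C_S(r) = S \cap C_R(r)$: an element $s \in S$ satisfies $sr = rs$ precisely when $s \in C_R(r)$. Since $(R,+)$ is abelian, $S + C_R(r)$ is again a subgroup of $(R,+)$, and it clearly contains $C_R(r)$. The second isomorphism theorem for abelian groups then yields an additive group isomorphism
\[
\frac{S}{S \cap C_R(r)} \;\cong\; \frac{S + C_R(r)}{C_R(r)},
\]
so that, taking orders, $|S : C_S(r)| = |S : S \cap C_R(r)| = |(S + C_R(r)) : C_R(r)|$.

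Next I would exploit the tower $C_R(r) \leq S + C_R(r) \leq R$ of finite additive groups. By the multiplicativity of indices, $|R : C_R(r)| = |R : S + C_R(r)| \cdot |(S + C_R(r)) : C_R(r)|$; in particular $|(S + C_R(r)) : C_R(r)| \leq |R : C_R(r)|$. Combining this with the displayed equality above gives $|S : C_S(r)| \leq |R : C_R(r)|$, as required. For the equality statement, equality holds if and only if $|(S + C_R(r)) : C_R(r)| = |R : C_R(r)|$, which, since $C_R(r) \leq S + C_R(r) \leq R$ and $R$ is finite, is equivalent to $|R : S + C_R(r)| = 1$, i.e.\ to $S + C_R(r) = R$.

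I do not expect a genuine obstacle here. The only points that need a little care are verifying that $C_S(r)$ is exactly the intersection $S \cap C_R(r)$ (so that the second isomorphism theorem applies verbatim) and noting that the finiteness of $R$ is what turns the index inequality into the stated equality criterion; the abelianness of $(R,+)$ is what makes $S + C_R(r)$ a subgroup in the first place.
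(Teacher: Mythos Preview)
Your argument is correct and is essentially the same as the paper's: both identify $C_S(r)=S\cap C_R(r)$ and use the product formula $|S+C_R(r)|=\dfrac{|S|\,|C_R(r)|}{|S\cap C_R(r)|}$ (which you phrase via the second isomorphism theorem) together with $S+C_R(r)\subseteq R$ to obtain the inequality and the equality criterion. Your write-up is a bit more explicit about the tower of indices, but there is no substantive difference in method.
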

\begin{proof}
Let $r$  be any element of $R$. We know that $S + C_R(r) \subseteq R$, which gives $\frac{|S||C_R(r)|}{|S \cap C_R(r)|} \leq |R|$. Therefore, $\frac{|S||C_R(r)|}{|C_S(r)|} \leq |R|$    and hence the lemma follows.

For equality, it is sufficient to note that $|S : C_S(r)| = |R : C_R(r)|$ if and only if $|S + C_R(r)| = |R|$.  
\end{proof}
Above lemma plays an important role in finding   bounds for $\Pr(S, R)$ and hence for $\Pr(R)$. We begin with the following result which is an improvement of  \cite[Theorem 4]{dmachale}. 

\begin{theorem}\label{theorem01}
Let $S$ be a subring of a ring $R$. Then 
\[
\Pr(R) \leq \Pr(S, R) \leq \Pr(S).
\]
\end{theorem}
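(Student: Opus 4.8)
The plan is to prove the two inequalities separately using the counting formula \eqref{formula-1} together with Lemma~\ref{lemma1}.

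For the lower bound $\Pr(R) \le \Pr(S, R)$, I would start from the identity $\Pr(R) = \frac{1}{|R|^2}\sum_{r \in R}|C_R(r)|$ and compare it term-by-term with $\Pr(S, R) = \frac{1}{|S||R|}\sum_{r \in R}|C_S(r)|$. Fixing $r \in R$, Lemma~\ref{lemma1} gives $|S : C_S(r)| \le |R : C_R(r)|$, which rearranges to $\frac{|C_S(r)|}{|S|} \ge \frac{|C_R(r)|}{|R|}$. Summing this inequality over all $r \in R$ and dividing by $|R|$ yields exactly $\Pr(S,R) \ge \Pr(R)$.

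For the upper bound $\Pr(S, R) \le \Pr(S)$, I would instead use the other form of \eqref{formula-1}, namely $\Pr(S, R) = \frac{1}{|S||R|}\sum_{s \in S}|C_R(s)|$, and compare with $\Pr(S) = \frac{1}{|S|^2}\sum_{s \in S}|C_S(s)|$. Here the key observation is that for $s \in S$ we have $C_S(s) = C_R(s) \cap S$, so $C_S(s)$ is a subgroup of $C_R(s)$ under addition; hence $|S : C_S(s)| = |S : C_R(s)\cap S|$, and since adding more elements of $R$ can only enlarge the centralizer relative to its intersection with $S$, one shows $|C_R(s)|/|R| \le |C_S(s)|/|S|$. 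Concretely, $S + C_R(s) \subseteq R$ gives $\frac{|S||C_R(s)|}{|C_S(s)|} = \frac{|S||C_R(s)|}{|S \cap C_R(s)|} \le |R|$, i.e. $\frac{|C_R(s)|}{|R|} \le \frac{|C_S(s)|}{|S|}$. Summing over $s \in S$ and dividing by $|S|$ gives $\Pr(S,R) \le \Pr(S)$.

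I do not expect a genuine obstacle here: both halves are essentially the same subgroup-index estimate from Lemma~\ref{lemma1} (applied once with the roles of the two factors as written, once with $r$ restricted to lie in $S$), followed by a termwise summation. The only point requiring a little care is making sure the two expressions in \eqref{formula-1} are applied to the correct inequality — using $\sum_{r\in R}|C_S(r)|$ for the lower bound and $\sum_{s\in S}|C_R(s)|$ for the upper bound — and noting that $C_S(s) = S \cap C_R(s)$ so that the finite-group index argument applies verbatim.
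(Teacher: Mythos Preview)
Your proposal is correct and matches the paper's proof essentially line for line: both inequalities follow from the termwise estimate $\frac{|C_S(r)|}{|S|} \ge \frac{|C_R(r)|}{|R|}$ of Lemma~\ref{lemma1}, summed over $r\in R$ for the lower bound and over $r=s\in S$ for the upper bound, using the two forms of \eqref{formula-1}. The only cosmetic difference is that for the upper bound you re-derive the index inequality from $S + C_R(s) \subseteq R$ rather than simply invoking Lemma~\ref{lemma1} again with $r=s$.
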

\begin{proof}
By \eqref{formula-1} and Lemma \ref{lemma1}, We have 
\[
\Pr(S, R)  = \frac{1}{|S| |R|}\underset{r \in R}{\sum}|C_S(r)|                    \geq  \frac{1}{|R| |R|} \underset{r \in R}{\sum}|C_R(r)| = \Pr(R)
\]   
and
\[
\Pr(S, R)  =   \frac{1}{|S| |R|} \underset{s \in S}{\sum}|C_R(s)|                    \leq  \frac{1}{|S| |S|} \underset{s \in S}{\sum}|C_S(s)| = \Pr(S).
\] 
Hence the theorem follows.
\end{proof}

\begin{corollary}\label{cor1}
Let $S$ be a subring of a ring $R$. Then 
\begin{enumerate}
 \item $\Pr(S, R) = \Pr(R)$ if and only if $S + C_R(r) = R$ for all $r \in R$.
\item $\Pr(S, R) = \Pr(S)$ if and only if $S + C_R(r) = R$ for all $r \in S$. 
\end{enumerate}
\end{corollary}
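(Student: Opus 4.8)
The plan is to read off both equivalences directly from Theorem \ref{theorem01} together with the equality case of Lemma \ref{lemma1}, so the "proof" is really a matter of tracking when each of the two inequalities in the chain $\Pr(R) \leq \Pr(S, R) \leq \Pr(S)$ becomes an equality.

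For part (i), I would revisit the first displayed inequality in the proof of Theorem \ref{theorem01}. The step
\[
\Pr(S, R) = \frac{1}{|S||R|}\sum_{r \in R}|C_S(r)| \geq \frac{1}{|R||R|}\sum_{r \in R}|C_R(r)| = \Pr(R)
\]
is obtained by summing, over all $r \in R$, the term-by-term inequality $\frac{|C_S(r)|}{|S|} \geq \frac{|C_R(r)|}{|R|}$, which is exactly $|S : C_S(r)| \leq |R : C_R(r)|$ from Lemma \ref{lemma1}. Since all terms in the sum point the same way, the aggregate inequality is an equality if and only if every individual inequality $|S : C_S(r)| \leq |R : C_R(r)|$ is an equality, and by the equality clause of Lemma \ref{lemma1} this happens exactly when $S + C_R(r) = R$ for every $r \in R$. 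That gives (i). I should state clearly the elementary fact being used: a sum of nonnegative quantities $a_r - b_r \geq 0$ over a finite index set vanishes iff each $a_r = b_r$.

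For part (ii), I would do the analogous analysis on the second inequality,
\[
\Pr(S, R) = \frac{1}{|S||R|}\sum_{s \in S}|C_R(s)| \leq \frac{1}{|S||S|}\sum_{s \in S}|C_S(s)| = \Pr(S),
\]
which is a sum over $s \in S$ of the term-by-term inequality $\frac{|C_R(s)|}{|R|} \leq \frac{|C_S(s)|}{|S|}$, i.e. $|S : C_S(s)| \leq |R : C_R(s)|$ applied with $r = s \in S$. Hence equality in the sum holds iff $|S : C_S(s)| = |R : C_R(s)|$ for all $s \in S$, which by Lemma \ref{lemma1} is equivalent to $S + C_R(s) = R$ for all $s \in S$. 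That is exactly the condition in (ii).

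I do not anticipate a genuine obstacle here; the only thing to be careful about is making the bookkeeping explicit, namely that the equality case of a summed inequality forces equality termwise because each summand contributes with the same sign, and then invoking the sharp form of Lemma \ref{lemma1} for each term. It is also worth noting in passing that the condition in (i) (ranging over all $r \in R$) is formally stronger than the condition in (ii) (ranging only over $r \in S$), which is consistent with $\Pr(R) \leq \Pr(S, R) \leq \Pr(S)$; I might remark on this but it is not needed for the proof.
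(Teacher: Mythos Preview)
Your proposal is correct and follows exactly the paper's approach: it amounts to observing that each inequality in Theorem \ref{theorem01} is a finite sum of termwise inequalities coming from Lemma \ref{lemma1}, so equality in the sum forces equality in every term, and then invoking the equality clause of Lemma \ref{lemma1}. The paper's own proof states this in one sentence; you have simply made the bookkeeping explicit.
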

\begin{proof}
It is sufficient to note that  equalities hold in Theorem \ref{theorem01} if and only if    the equality holds in Lemma \ref{lemma1}.
\end{proof}
We remark that  $\Pr(S, R) = \Pr(R)$ implies $\Pr(S, R) = \Pr(S)$; but the converse  is not true, since  for a non-commutative ring $R$ and any subring $S \subseteq Z(R)$  we have $\Pr(S) = \Pr(S, R) = 1$. But $\Pr(R) \neq \Pr(S, R)$. There exist finite  rings and subrings such that the inequalities in Theorem \ref{theorem01} are strict. For example, consider the ring $R  = \left\lbrace  \begin{bmatrix}
    a & b\\
    0 & 0\\
  \end{bmatrix} : a, b \in \Z_2 \right\rbrace$ and its subring  $S = \left\lbrace  \begin{bmatrix}
    a & a\\
    0 & 0\\
  \end{bmatrix} : a \in \Z_2 \right\rbrace$. We have $\Pr(R) = \frac{5}{8},   \Pr(S, R) = \frac{3}{4}$ and  $\Pr(S) = 1$. Hence,  $\Pr(R) < \Pr(S, R) < \Pr(S)$.  

Further, we have the following result.
\begin{theorem}\label{refine}
Let $S_1$ and $S_2$ be two subrings of a ring $R$ such that $S_1 \subseteq S_2$. Then $\Pr(S_2, R)\leq \Pr(S_1, R) \leq \Pr(S_1, S_2)$. 
\end{theorem}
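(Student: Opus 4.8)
The plan is to treat the two inequalities separately, in each case starting from the representation $\Pr(S,R) = \frac{1}{|S||R|}\sum_{r \in R}|C_S(r)|$ of \eqref{formula-1} and reducing the claim to a comparison of indices for one fixed element at a time. The key observation is that Lemma \ref{lemma1} is exactly the tool needed: a suitably relativized version of it gives the left inequality, and the lemma as stated gives the right one.

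For the left-hand inequality $\Pr(S_2, R)\leq \Pr(S_1, R)$, I would first prove a version of Lemma \ref{lemma1} for the nested pair $S_1\subseteq S_2$: for every $r\in R$,
\[
|S_1 : C_{S_1}(r)| \leq |S_2 : C_{S_2}(r)|.
\]
This follows by the same argument as Lemma \ref{lemma1}. Since $C_{S_1}(r) = S_1\cap C_{S_2}(r)$ and $S_1 + C_{S_2}(r)$ is an additive subgroup of $R$ contained in $S_2$, we get
\[
\frac{|S_1||C_{S_2}(r)|}{|C_{S_1}(r)|} = |S_1 + C_{S_2}(r)| \leq |S_2|,
\]
which rearranges to $\frac{|C_{S_2}(r)|}{|S_2|} \leq \frac{|C_{S_1}(r)|}{|S_1|}$. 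Summing over all $r\in R$ and dividing by $|R|$ yields $\Pr(S_2,R)\leq \Pr(S_1,R)$ via \eqref{formula-1}.

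For the right-hand inequality $\Pr(S_1, R)\leq \Pr(S_1, S_2)$, I would instead use $\Pr(S,R) = \frac{1}{|S||R|}\sum_{s\in S}|C_R(s)|$ and apply Lemma \ref{lemma1} directly to the subring $S_2$ of $R$. For each $s\in S_1\subseteq S_2\subseteq R$ the lemma gives $|S_2 : C_{S_2}(s)| \leq |R : C_R(s)|$, i.e. $\frac{|C_R(s)|}{|R|} \leq \frac{|C_{S_2}(s)|}{|S_2|}$; summing over $s\in S_1$ and dividing by $|S_1|$ gives $\Pr(S_1,R)\leq \Pr(S_1,S_2)$.

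There is no real obstacle; the only point requiring care is recognizing that Lemma \ref{lemma1} generalizes verbatim to a chain of two subrings (for the first inequality) and applies as is to the pair $S_2\subseteq R$ (for the second). If desired, one can also record the equality conditions in the spirit of Corollary \ref{cor1}: $\Pr(S_2,R)=\Pr(S_1,R)$ if and only if $S_1 + C_{S_2}(r) = S_2$ for all $r\in R$, and $\Pr(S_1,R)=\Pr(S_1,S_2)$ if and only if $S_2 + C_R(s) = R$ for all $s\in S_1$.
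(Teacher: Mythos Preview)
Your proof is correct and follows essentially the same route as the paper: both arguments use the formula \eqref{formula-1} together with the index comparison of Lemma~\ref{lemma1} (applied to the two inclusions $S_1\subseteq S_2$ and $S_2\subseteq R$), summing over $r\in R$ for the left inequality and over $s\in S_1$ for the right one. Your explicit verification of the relativized version $|S_1:C_{S_1}(r)|\le |S_2:C_{S_2}(r)|$ for $r\in R$ is exactly what the paper invokes in one line by citing Lemma~\ref{lemma1}.
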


\begin{proof}
Since $S_1 \subseteq S_2 \subseteq R$ we have, by Lemma \ref{lemma1}, 
\[
|S_1 : C_{S_1}(r)| \leq |S_2 : C_{S_2}(r)| \leq |R : C_{R}(r)| \; \forall \; r \in R.
\]
Thus 
\begin{align*}
\Pr(S_1, S_2) = \frac{1}{|S_1||S_2|} \underset{s \in S_1}{\sum}|C_{S_2}(s)| \geq \frac{1}{|S_1|} \underset{s \in S_1}{\sum}\frac{|C_{S_2}(s)|}{|R|} = \Pr(S_1, R).
\end{align*}
Also 
\begin{align*}
\Pr(S_1, R) = \frac{1}{|S_1||R|} \underset{s \in R}{\sum}|C_{S_1}(s)| \geq \frac{1}{|R|} \underset{s \in R}{\sum}\frac{|C_{S_2}(s)|}{|S_2|} = \Pr(S_2, R).
\end{align*}
Hence the theorem follows.     
\end{proof}
Notice that $\Pr(R) \leq \Pr(S_2, R)$ and $\Pr(S_1, S_2) \leq \Pr(S_1)$. Therefore, the bounds obtained in Theorem \ref{refine} is a refinement of the bounds obtained in  Theorem \ref{theorem01}.

Let ${\mathcal{R}}_p$ denote the set of all finite rings having $p$ as the smallest prime dividing their orders. The next few  results give bounds for  $\Pr(S, R)$, where $S$ is a subring of a ring $R$ and $R$ is a ring in ${\mathcal{R}}_p$.

\begin{theorem}\label{theorem001}
If $R \in {\mathcal{R}}_p$ and $S$ is   a subring of $R$. Then 
%\begin{enumerate}
%\item $\frac{|Z(R)|}{|R|} + \frac{p(|R| - |Z(R)|)}{|R|^2} \leq \Pr(R) \leq \frac{(p - 1)|Z(R)| + |R|}{p|R|}$.
%\item 
\[
\frac{|Z(S, R)|}{|S|} + \frac{p(|S| - |Z(S, R)|)}{|S||R|} \leq \Pr(S, R) \leq \frac{(p -1)|Z(S, R)| + |S|}{p|S|}.
\]
%\end{enumerate}
\end{theorem}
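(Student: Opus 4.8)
The plan is to work directly from the class-equation-type formula \eqref{formula-1}, writing $\Pr(S, R) = \frac{1}{|S||R|}\sum_{s \in S}|C_R(s)|$ and splitting the sum over $S$ into the part coming from $Z(S, R)$ and the part coming from $S \setminus Z(S, R)$. If $s \in Z(S, R)$ then $sr = rs$ for all $r \in R$, so $C_R(s) = R$; these $|Z(S, R)|$ terms together contribute $|Z(S, R)|\,|R|$ to the sum. Everything then reduces to estimating $|C_R(s)|$ from above and below for the remaining $|S| - |Z(S, R)|$ summands, and assembling the two resulting inequalities.

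For those summands, fix $s \in S \setminus Z(S, R)$ and recall from the introduction that $Z(S, R) = Z(R) \cap S$, so $s \notin Z(R)$; thus $C_R(s)$ is a proper subgroup of $(R, +)$. Its index $|R : C_R(s)|$ is then a divisor of $|R|$ strictly larger than $1$, so since $R \in \mathcal{R}_p$ we get $|R : C_R(s)| \geq p$, that is $|C_R(s)| \leq |R|/p$. On the other hand, $0 \in Z(S, R)$ forces $s \neq 0$, so $C_R(s)$ is a nontrivial subgroup of $(R, +)$, whence $|C_R(s)|$ is a divisor of $|R|$ exceeding $1$ and therefore $|C_R(s)| \geq p$.

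Substituting these two estimates into the split sum gives
\[
|Z(S,R)|\,|R| + p\bigl(|S| - |Z(S,R)|\bigr) \;\le\; \sum_{s \in S}|C_R(s)| \;\le\; |Z(S,R)|\,|R| + \tfrac{|R|}{p}\bigl(|S| - |Z(S,R)|\bigr),
\]
and dividing through by $|S||R|$ yields exactly the claimed lower and upper bounds, after the routine simplifications $\frac{|Z(S,R)||R|}{|S||R|} = \frac{|Z(S,R)|}{|S|}$ and $\frac{|Z(S,R)|}{|S|} + \frac{|S| - |Z(S,R)|}{p|S|} = \frac{(p-1)|Z(S,R)| + |S|}{p|S|}$.

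There is no genuine obstacle in this argument; it is essentially a counting computation. The only points requiring a little care are making sure that for $s \notin Z(S, R)$ the centralizer $C_R(s)$ is simultaneously a proper subgroup (so its index is bounded below by $p$) and a nontrivial subgroup (so its order is bounded below by $p$) of $(R, +)$, both of which hinge on the observations $Z(S, R) = Z(R) \cap S$ and $0 \in Z(S, R)$.
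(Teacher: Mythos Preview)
Your argument is correct and follows exactly the same route as the paper: split the sum in \eqref{formula-1} over $Z(S,R)$ and $S\setminus Z(S,R)$, use $p \le |C_R(s)| \le |R|/p$ for $s \notin Z(S,R)$, and simplify. In fact you justify the inequality $p \le |C_R(s)| \le |R|/p$ more carefully than the paper does; the only implicit step is that $s \ne 0$ forces $C_R(s)$ to be nontrivial, which holds because $s \in C_R(s)$.
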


\begin{proof}
%(i) We know that 
%\begin{equation}\label{boundeq-1}
%|R|^2 \Pr(R) = |R||Z(R)| + \underset{r \in R - Z(R)}{\sum}|C_R(r)|.
%\end{equation}
%For $r \notin Z(R)$ we have $p \leq |C_R(r)| \leq \frac{|R|}{p}$. Therefore 
%\[
%\underset{r \in R - Z(R)}{\sum}p \leq \underset{r \in R - Z(R)}{\sum}|C_R(r)| \leq \underset{r \in R - Z(R)}{\sum}\frac{|R|}{p}
%\]
%and so 
%\begin{equation}\label{boundeq-2}
%p(|R| - |Z(R)|) \leq \underset{r \in R - Z(R)}{\sum}|C_R(r)| \leq \dfrac{|R|}{p}(|R| - |Z(R)|).
%\end{equation} 
%Hence, the result follows from \eqref{boundeq-1} and \eqref{boundeq-2}.

%\noindent (ii) 
We know that 
\begin{equation}\label{boundeq-3}
|S||R| \Pr(S, R) =  |R||Z(S, R)| + \underset{s \in S - Z(S, R)}{\sum}|C_R(s)|.
\end{equation}
 If $s \notin Z(S, R)$ then $p \leq |C_R(s)| \leq \frac{|R|}{p}$. Therefore 
\begin{equation}\label{boundeq-4}
\underset{s \in S - Z(S, R)}{\sum}p \leq \underset{s \in S - Z(S, R)}{\sum}|C_R(s)| \leq \underset{s \in S - Z(S, R)}{\sum}\frac{|R|}{p}.
\end{equation}
Hence, the result follows from \eqref{boundeq-3} and \eqref{boundeq-4}.                  
\end{proof}
Putting  $S = R$, in Theorem \ref{theorem001}, we get the following bounds for $\Pr(R)$.
\begin{corollary}\label{corprbd}
If $R \in {\mathcal{R}}_p$ and $S$ is   a subring of $R$. Then
\[
\frac{|Z(R)|}{|R|} + \frac{p(|R| - |Z(R)|)}{|R|^2} \leq \Pr(R) \leq \frac{(p - 1)|Z(R)| + |R|}{p|R|}. 
\]
\end{corollary}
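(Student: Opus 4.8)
The plan is to derive this directly from Theorem~\ref{theorem001} by specializing to the case $S = R$. Recall from the introduction that $Z(R, R) = Z(R)$, and directly from \eqref{formula-001} that $\Pr(R, R) = \Pr(R)$. So the first step is simply to substitute $S = R$ into the statement of Theorem~\ref{theorem001}, replacing every occurrence of $Z(S, R)$ by $Z(R)$, of $\Pr(S, R)$ by $\Pr(R)$, and of $|S|$ by $|R|$. The lower bound then reads
\[
\frac{|Z(R)|}{|R|} + \frac{p(|R| - |Z(R)|)}{|R||R|} = \frac{|Z(R)|}{|R|} + \frac{p(|R| - |Z(R)|)}{|R|^2},
\]
and the upper bound reads $\frac{(p-1)|Z(R)| + |R|}{p|R|}$, which is precisely the claimed two-sided estimate. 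No further work is needed once Theorem~\ref{theorem001} is granted.

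Alternatively, one could give a self-contained argument mirroring the proof of Theorem~\ref{theorem001}: start from the decomposition
\[
|R|^2 \Pr(R) = |R||Z(R)| + \underset{x \in R - Z(R)}{\sum}|C_R(x)|,
\]
and bound each summand using $p \leq |C_R(x)| \leq \frac{|R|}{p}$ for $x \notin Z(R)$, then sum over the $|R| - |Z(R)|$ non-central elements and divide by $|R|^2$.

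The only point deserving a remark — and it is already implicit in the proof of Theorem~\ref{theorem001} — is the two-sided bound $p \leq |C_R(x)| \leq \frac{|R|}{p}$ for $x \notin Z(R)$. Since $(C_R(x), +)$ is a subgroup of $(R, +)$ and $R \in \mathcal{R}_p$, its order is a divisor of $|R|$; it is a proper subgroup (as $x \notin Z(R)$), so its index is at least $p$, giving the upper bound; and it is nontrivial (it contains $x \neq 0$, since $0 \in Z(R)$), so its order is a divisor of $|R|$ exceeding $1$, hence at least the smallest prime $p$, giving the lower bound. This verification is the only (very minor) obstacle; everything else is a routine substitution.
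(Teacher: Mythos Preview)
Your proof is correct and follows exactly the paper's approach: the paper simply states that the corollary is obtained by putting $S = R$ in Theorem~\ref{theorem001}. Your additional remark justifying $p \leq |C_R(x)| \leq |R|/p$ is a nice elaboration of a point the paper leaves implicit, but the core argument is identical.
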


If $R$ is a non-commutative ring and $p$  the smallest prime dividing $|R|$ then, by Theorem 2 of  \cite{dmachale}, we have $\Pr(R) \leq \frac{p^2 + p - 1}{p^3}$. For such ring $R$ we have $|R : Z(R)| \geq p^2$ and so
\[
\frac{(p - 1)|Z(R)| + |R|}{p|R|} \leq \frac{p^2 + p - 1}{p^3}.
\]
Thus the upper bound obtained in Corollary  \ref{corprbd} is better than the upper bound obtained in Theorem 2 of \cite{dmachale}.

  We also have the following bounds for $\Pr(S, R)$.
\begin{theorem}\label{theorem02}
Let $R \in {\mathcal{R}}_p$ and $S$ be   a subring of $R$.  
\bnum
\item If $S \nsubseteq Z(R)$ and $S$ is commutative, then $\Pr(S, R) \leq \frac{2p - 1}{p^2}$.
\item If $S \nsubseteq Z(R)$ and $S$ is non-commutative, then $\Pr(S, R) \leq \frac{p^2 + p - 1}{p^3}$.
\enum
\end{theorem}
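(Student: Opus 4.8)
The plan is to mimic the proof of Theorem~\ref{theorem001}, but to exploit the extra information that $S$ itself has nontrivial commuting structure by bounding $|C_R(s)|$ more carefully for $s \in S \setminus Z(R)$. Start from the exact identity
\[
|S||R|\Pr(S,R) = |R||Z(S,R)| + \underset{s \in S - Z(S,R)}{\sum}|C_R(s)|,
\]
which is \eqref{boundeq-3}, and note that since $S \nsubseteq Z(R)$ we have $Z(S,R) = Z(R)\cap S \subsetneq S$, so the sum on the right is nonempty. The point of the case split is that $Z(S,R)$ equals $Z(S)$ when $S$ is commutative but may be strictly smaller otherwise, and in each case Theorem~\ref{theorem001} already gives a clean bound on $\Pr(S,R)$ in terms of $|Z(S,R)|/|S|$; so what remains is to bound $|Z(S,R)|/|S|$ from above.

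For part (i), suppose $S$ is commutative with $S \nsubseteq Z(R)$. Then $Z(S,R) = Z(R)\cap S$ is a proper subring of $S$, hence $|S : Z(S,R)| \geq p$ (the smallest prime dividing $|R|$, hence dividing $|S|$ too), so $|Z(S,R)|/|S| \leq 1/p$. Plugging this into the upper bound of Theorem~\ref{theorem001},
\[
\Pr(S,R) \leq \frac{(p-1)|Z(S,R)| + |S|}{p|S|} = \frac{p-1}{p}\cdot\frac{|Z(S,R)|}{|S|} + \frac{1}{p} \leq \frac{p-1}{p^2} + \frac{1}{p} = \frac{2p-1}{p^2},
\]
which is exactly the claimed bound. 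For part (ii), $S$ is non-commutative, so by \cite[Lemma 1]{dmachale} the additive group $S/Z(S)$ is non-cyclic, whence $|S : Z(S)| \geq p^2$; since $Z(S,R) \subseteq Z(S)$ we get $|Z(S,R)|/|S| \leq 1/p^2$. Substituting into the same upper bound gives
\[
\Pr(S,R) \leq \frac{p-1}{p}\cdot\frac{1}{p^2} + \frac{1}{p} = \frac{(p-1) + p^2}{p^3} = \frac{p^2 + p - 1}{p^3},
\]
as required.

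The only genuinely delicate point is justifying the index inequalities $|S : Z(S,R)| \geq p$ and $|S : Z(S)| \geq p^2$. The first is immediate from Lagrange's theorem once one knows $Z(S,R) \neq S$, which is precisely the hypothesis $S \nsubseteq Z(R)$ unpacked via $Z(S,R) = Z(R)\cap S$. The second requires that a non-commutative ring $S$ cannot have $S/Z(S)$ of prime order — and this is exactly the quoted consequence of \cite[Lemma 1]{dmachale} that the paper flags in the introduction (a group with cyclic central quotient is abelian, applied to $(S,+)$ acting on itself by the commutator map, which forces $S$ commutative). So no new machinery is needed; the proof is a two-line deduction from Theorem~\ref{theorem001} in each case, and I would present it essentially as displayed above, perhaps packaging the common step ``$\Pr(S,R) \leq \frac{p-1}{p}\cdot\frac{|Z(S,R)|}{|S|} + \frac{1}{p}$'' once before the case analysis.
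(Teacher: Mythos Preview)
Your argument is correct, and for part~(i) it is exactly the paper's proof: bound $|Z(S,R)|/|S|\le 1/p$ and plug into the upper bound of Theorem~\ref{theorem001}. For part~(ii), however, you take a genuinely different route from the paper. The paper does not bound $|Z(S,R)|$ at all in this case; instead it invokes Theorem~\ref{theorem01} to get $\Pr(S,R)\le\Pr(S)$ and then quotes MacHale's Theorem~2 (that $\Pr(S)\le\frac{p^2+p-1}{p^3}$ for any non-commutative ring whose smallest prime divisor is $p$) as a black box. Your approach instead stays inside Theorem~\ref{theorem001} throughout, using $Z(S,R)\subseteq Z(S)$ together with $|S:Z(S)|\ge p^2$ to get $|Z(S,R)|/|S|\le 1/p^2$. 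This is more uniform with part~(i) and is essentially a re-derivation of MacHale's bound in situ rather than an appeal to it; the paper's version is shorter to write but leans on the external citation.

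Two small remarks on exposition. First, your motivational sentence ``$Z(S,R)$ equals $Z(S)$ when $S$ is commutative'' is false: if $S$ is commutative then $Z(S)=S$, whereas $Z(S,R)=Z(R)\cap S\subsetneq S$ under your hypothesis. This does not affect the proof, but you should drop or correct the remark. Second, the phrase ``the smallest prime dividing $|R|$, hence dividing $|S|$ too'' is not quite what you need (and not quite true as stated): what matters is that every prime dividing $|S|$ is at least $p$, so any nontrivial index in $S$ is at least $p$, and in part~(ii) at least $p^2$ once you know $S/Z(S)$ is non-cyclic.
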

\begin{proof}
(i) If $S \nsubseteq Z(R)$ then $|Z(S, R)| \leq \frac{|S|}{p}$. Hence, using Theorem \ref{theorem001}, we have 
\[
\Pr(S, R) \leq \frac{(p -1)|Z(S, R)| + |S|}{p|S|} \leq \frac{(p - 1)|S| + p|S|}{p^2|S|}   = \frac{2p - 1}{p^2}.
\]
\noindent (ii) If $S \nsubseteq Z(R)$ then $\Pr(S, R) \ne 1$.  Also, by Theorem \ref{theorem01}, we have  $\Pr(S, R) \leq \Pr(S)$. Since $S$ is non-commutative and $p$ is the smallest prime dividing $|S|$, the result follows from Theorem 2 of \cite{dmachale}.
\end{proof}
\noindent In particular, we have the following result.
\begin{corollary}\label{theorem2}
Let $R$ be any finite non-commutative ring and $S$   a subring of $R$. 
\bnum
\item If $S \nsubseteq Z(R)$ and $S$ is commutative, then $\Pr(S, R) \leq \frac{3}{4}$.
\item If $S \nsubseteq Z(R)$ and $S$ is not commutative, then $\Pr(S, R) \leq \frac{5}{8}$.
\enum
\end{corollary}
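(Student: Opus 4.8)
The plan is to derive Corollary \ref{theorem2} directly from Theorem \ref{theorem02}. The only extra input I need is the trivial observation that a finite non-commutative ring $R$ has order divisible by some prime, so the smallest prime $p$ dividing $|R|$ satisfies $p \geq 2$; in other words $R \in \mathcal{R}_p$ for some prime $p \geq 2$. With this in hand, each part of the corollary will follow by feeding the hypotheses into the matching part of Theorem \ref{theorem02} and then bounding the resulting function of $p$ uniformly over all primes $p \geq 2$.

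For part (i), I would assume $S \nsubseteq Z(R)$ with $S$ commutative and invoke Theorem \ref{theorem02}(i) to obtain $\Pr(S, R) \leq \frac{2p - 1}{p^2}$. Then I would check that $p \mapsto \frac{2p-1}{p^2}$ is non-increasing on $[1, \infty)$ by the one-line estimate $\frac{d}{dp}\left(\frac{2p-1}{p^2}\right) = \frac{2 - 2p}{p^3} \leq 0$ for $p \geq 1$, so its maximum over primes $p \geq 2$ is attained at $p = 2$ and equals $\frac{3}{4}$. Hence $\Pr(S, R) \leq \frac{3}{4}$.

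For part (ii), I would assume $S \nsubseteq Z(R)$ with $S$ non-commutative and invoke Theorem \ref{theorem02}(ii) to obtain $\Pr(S, R) \leq \frac{p^2 + p - 1}{p^3}$. Writing this as $p^{-1} + p^{-2} - p^{-3}$, its derivative is $-p^{-4}(p - 1)(p + 3) \leq 0$ for $p \geq 1$, so this function is again non-increasing and its value over primes $p \geq 2$ is maximized at $p = 2$, where it equals $\frac{5}{8}$. Hence $\Pr(S, R) \leq \frac{5}{8}$.

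Since the substantive work is already contained in Theorem \ref{theorem02} (and, further back, in Lemma \ref{lemma1} and Theorem \ref{theorem001}), I do not expect any real obstacle here; the only thing to be careful about is confirming the elementary monotonicity in $p$ of the two rational functions $\frac{2p-1}{p^2}$ and $\frac{p^2+p-1}{p^3}$, each of which is settled by a single derivative sign computation.
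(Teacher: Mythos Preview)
Your proposal is correct and follows the same route the paper intends: Corollary \ref{theorem2} is stated immediately after Theorem \ref{theorem02} with the phrase ``In particular,'' and no separate proof is given, so the paper is simply specializing Theorem \ref{theorem02} using $p \geq 2$. Your explicit monotonicity check for $\frac{2p-1}{p^2}$ and $\frac{p^2+p-1}{p^3}$ is more than the paper spells out but is exactly the content behind the word ``in particular.''
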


%\begin{proof}
%(i) If $S \nsubseteq Z(R)$ then $|Z(S, R)| \leq \frac{|S|}{2}$. Hence, using Theorem \ref{theorem1} (ii), we have 
%\[
%\Pr(S, R) \leq \dfrac{|Z(S, R)| + |S|}{2|S|} \leq \dfrac{|S|}{2} \dfrac{1}{2|S|} + \dfrac{1}{2} = \dfrac{3}{4}.
%\]
%\noindent (ii) If $S \nsubseteq Z(R)$ then $\Pr(S, R) \ne 1$.  Also $\Pr(S, R) \leq \Pr(S) \leq \frac{5}{8}$, since $S$ is non-commutative.
%\end{proof}
The following two results  characterize  a subring $S$ of a finite ring $R$ such that  $\Pr(S, R) = \frac{2p - 1}{p^2}$ or $\frac{p^2 + p - 1}{p^3}$.
\begin{theorem}\label{dc001}
Let $S$ be a commutative subring of a finite ring $R$ such that $\Pr(S, R) = \frac{2p - 1}{p^2}$, for some prime $p$. Then $p$ divides $|R|$. Moreover, if $p$ is the smallest prime dividing $|R|$ then
\[
\frac{S}{Z(S, R)} \cong {\mathbb{Z}}_p.
\] 
\end{theorem}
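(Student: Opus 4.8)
The plan is to treat the two assertions separately: the first by an elementary divisibility argument, the second by forcing the upper bound of Theorem~\ref{theorem001} to be attained and then squeezing the index $|S:Z(S,R)|$.

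For the first assertion, recall that by definition $\Pr(S,R) = N/(|S||R|)$ where $N = |\{(s,r) \in S \times R : sr = rs\}|$ is a positive integer. Equating this with $\frac{2p-1}{p^2}$ and clearing denominators gives $(2p-1)\,|S||R| = p^2 N$. Since $2p-1 \equiv -1 \pmod{p}$ we have $\gcd(2p-1,p) = 1$, so $p^2 \mid |S||R|$; in particular $p \mid |S||R|$, hence $p \mid |S|$ or $p \mid |R|$. As $(S,+)$ is a subgroup of the finite abelian group $(R,+)$, Lagrange's theorem gives $|S| \mid |R|$, so in either case $p \mid |R|$.

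For the \textit{moreover} part, assume $p$ is the smallest prime dividing $|R|$, so that $R \in \mathcal{R}_p$. First note that $\frac{2p-1}{p^2} < 1$ (equivalently $(p-1)^2 > 0$), so $\Pr(S,R) \neq 1$; hence $Z(S,R) \neq S$ and therefore $S \nsubseteq Z(R)$. Now apply the upper bound of Theorem~\ref{theorem001}:
\[
\frac{2p-1}{p^2} = \Pr(S,R) \leq \frac{(p-1)|Z(S,R)| + |S|}{p|S|},
\]
and multiplying through by $p^2|S|$ and cancelling a factor of $p-1$ yields $|S| \leq p\,|Z(S,R)|$, i.e. $|S : Z(S,R)| \leq p$. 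Conversely, $Z(S,R) = Z(R) \cap S$ is a proper additive subgroup of $S$ (because $S \nsubseteq Z(R)$), so $|S : Z(S,R)|$ is a divisor of $|S|$, hence of $|R|$, which is larger than $1$; minimality of $p$ then forces $|S : Z(S,R)| \geq p$. Combining the two inequalities gives $|S : Z(S,R)| = p$, and since $(S,+)$ is abelian the quotient $\frac{S}{Z(S,R)}$ is an abelian group of prime order $p$, hence cyclic, so $\frac{S}{Z(S,R)} \cong \mathbb{Z}_p$.

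There is no serious obstacle here: the whole argument is a ``squeeze'' on the index $|S:Z(S,R)|$. The only points needing a little care are the algebraic simplification of the inequality coming from Theorem~\ref{theorem001} and the justification that $|S:Z(S,R)| \geq p$, which relies on the hypothesis that $p$ is the smallest prime dividing $|R|$ together with the fact that $|S| \mid |R|$. (One could equally deduce $|S:Z(S,R)| = p$ by observing that $\Pr(S,R) = \frac{2p-1}{p^2}$ is precisely the boundary value in Theorem~\ref{theorem02}(i), but the direct route via Theorem~\ref{theorem001} is cleaner.)
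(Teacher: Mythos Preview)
Your proof is correct and follows essentially the same approach as the paper's: both use the upper bound from Theorem~\ref{theorem001} to force $|S:Z(S,R)|\le p$, then exclude index $1$ to conclude the quotient is $\mathbb{Z}_p$. Your version is somewhat more explicit---you spell out the divisibility argument for the first assertion and justify why the index cannot lie strictly between $1$ and $p$ (via $|S:Z(S,R)|\mid |R|$ and minimality of $p$), whereas the paper leaves these steps implicit.
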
 
\begin{proof}
The first part follows from the definition of $\Pr(S, R)$. For the second part, using Theorem \ref{theorem001}, we have $\Pr(S, R) \leq \frac{(p - 1)|Z(S, R)| + |S|}{p|S|}$ and so $|S : Z(S, R)| \leq p$, as $\Pr(S, R) = \frac{2p - 1}{p^2}$. If $|S : Z(S, R)| = 1$ then $\Pr(S, R) = 1$, which is a contradiction. Hence $|S : Z(S,\, R)| = p$ which gives $\frac{S}{Z(S,\, R)} \cong \Z_p$.
\end{proof}

\begin{theorem}\label{dc002}
Let $S$ be a non-commutative subring of a finite ring $R$ such that $\Pr(S, R) = \frac{p^2 + p - 1}{p^3}$, for some prime $p$. Then $p$ divides $|R|$. Moreover, if $p$ is the smallest prime dividing $|R|$ then
\[
\frac{S}{Z(S, R)} \cong {\mathbb{Z}}_p \times  {\mathbb{Z}}_p.
\] 
\end{theorem}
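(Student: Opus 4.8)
The plan is to mirror the proof of Theorem~\ref{dc001}, replacing the index bound $|S:Z(S,R)|\le p$ by $|S:Z(S,R)|\le p^{2}$ and then invoking the classification of groups of order $p^{2}$.

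For the first assertion, observe that $|S||R|\Pr(S,R)=|\{(s,r)\in S\times R:sr=rs\}|$ is a positive integer, so $p^{3}$ divides $|S||R|(p^{2}+p-1)$. Since $p^{2}+p-1\equiv-1\pmod p$, we have $\gcd(p^{3},p^{2}+p-1)=1$, hence $p^{3}\mid|S||R|$; in particular $p\mid|S||R|$. As $(S,+)$ is a subgroup of $(R,+)$, $|S|$ divides $|R|$, so $p$ dividing one of $|S|,|R|$ implies $p\mid|R|$ in either case.

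Now assume $p$ is the smallest prime dividing $|R|$, so that $R\in\mathcal{R}_{p}$ and Theorem~\ref{theorem001} applies, giving $\Pr(S,R)\le\frac{(p-1)|Z(S,R)|+|S|}{p|S|}$. Substituting $\Pr(S,R)=\frac{p^{2}+p-1}{p^{3}}$, clearing denominators, subtracting $|S|$ and dividing by $p-1$ yields $|S|\le p^{2}|Z(S,R)|$, that is, $|S:Z(S,R)|\le p^{2}$. On the other hand, since $S$ is non-commutative, $\frac{S}{Z(S)}$ is non-cyclic by \cite[Lemma~1]{dmachale}; as $Z(S,R)=Z(R)\cap S\subseteq Z(S)$, the group $\frac{S}{Z(S)}$ is a quotient of $\frac{S}{Z(S,R)}$, so $\frac{S}{Z(S,R)}$ is non-cyclic too. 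Consequently $|S:Z(S,R)|$ is neither $1$ nor a prime. Moreover $|S:Z(S,R)|$ divides $|S|$, hence divides $|R|$, so every prime divisor of it is at least $p$; being composite and at most $p^{2}$, it must equal $p^{2}$. Since a non-cyclic group of order $p^{2}$ is isomorphic to $\Z_{p}\times\Z_{p}$, we conclude $\frac{S}{Z(S,R)}\cong\Z_{p}\times\Z_{p}$.

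The step I expect to require the most care is the last one — ruling out every possible value of $|S:Z(S,R)|$ strictly between $p$ and $p^{2}$ — and this is precisely where the hypothesis $R\in\mathcal{R}_{p}$ is essential, since it forces all prime divisors of $|S|$, and hence of the index, to be at least $p$, so no composite number in the open interval $(p,p^{2})$ can occur. As an alternative one may follow Theorem~\ref{dc001} even more closely by combining $\Pr(S,R)\le\Pr(S)$ (Theorem~\ref{theorem01}) with \cite[Theorem~2]{dmachale} and its equality case, after first checking that $p$ is the smallest prime dividing $|S|$; the latter holds because otherwise, if $q>p$ were the smallest prime dividing $|S|$, the strictly decreasing function $x\mapsto\frac{x^{2}+x-1}{x^{3}}$ would give $\Pr(S)\le\frac{q^{2}+q-1}{q^{3}}<\frac{p^{2}+p-1}{p^{3}}=\Pr(S,R)$, contradicting Theorem~\ref{theorem01}, and one then identifies $Z(S,R)$ with $Z(S)$ via the bound $|S:Z(S,R)|\le p^{2}$ from Theorem~\ref{theorem001}.
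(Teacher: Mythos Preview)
Your proof is correct and follows essentially the same route as the paper's: use Theorem~\ref{theorem001} to bound $|S:Z(S,R)|\le p^{2}$, invoke non-cyclicity of $S/Z(S,R)$ to rule out indices $1$ and $p$, and conclude via the classification of groups of order $p^{2}$. If anything, you are more careful than the paper in two places --- you justify explicitly why non-commutativity of $S$ forces $S/Z(S,R)$ (and not just $S/Z(S)$) to be non-cyclic, and you spell out why no index strictly between $p$ and $p^{2}$ can occur --- whereas the paper asserts both without comment.
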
 
\begin{proof}
The first part follows from the definition of $\Pr(S, R)$. For the second part, using Theorem \ref{theorem001}, we have $\Pr(S, R) \leq \frac{(p - 1)|Z(S, R)| + |S|}{p|S|}$ and so $|S : Z(S, R)| \leq p^2$, as $\Pr(S, R) = \frac{p^2 + p - 1}{p^3}$. Since $S$ is not-commutative,  $\frac{S}{Z(S, R)}$ is not cyclic. Therefore,  $|S : Z(S,\, R)| \ne 1, p$. So, $\frac{S}{Z(S, R)}$ is a non-cyclic group of order $p^2$. 
Hence   $\frac{S}{Z(S,\, R)} \cong {\mathbb{Z}}_p \times {\mathbb{Z}}_p$.
\end{proof}
\noindent In particular, for $p = 2$, we have the following results.
\begin{corollary}\label{dc}
Let $S$ be a subring of a finite ring $R$. 
\bnum
\item If $\Pr(S, R) = \frac{3}{4}$ and $S$ is commutative, then $\frac{S}{Z(S, R)} \cong \Z_2$.
\item If $\Pr(S, R) = \frac{5}{8}$ and $S$ is non-commutative, 
then $\frac{S}{Z(S, R)} \cong \Z_2 \times \Z_2$. 
\enum
\end{corollary}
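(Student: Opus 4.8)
The plan is to obtain Corollary \ref{dc} as the special case $p = 2$ of Theorems \ref{dc001} and \ref{dc002}, so almost all of the work is already done. The key observation is that the two numerical values in the statement are precisely the $p = 2$ instances of the fractions occurring in those theorems: one has $\frac{3}{4} = \left.\frac{2p - 1}{p^2}\right|_{p = 2}$ and $\frac{5}{8} = \left.\frac{p^2 + p - 1}{p^3}\right|_{p = 2}$. Thus the entire task reduces to checking that in each case the extra hypothesis "$p$ is the smallest prime dividing $|R|$" is automatically satisfied, after which the conclusions follow verbatim.

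For part (i), assume $S$ is commutative and $\Pr(S, R) = \frac{3}{4}$. Applying the first assertion of Theorem \ref{dc001} with $p = 2$ shows that $2$ divides $|R|$. Since $2$ is the smallest prime of all, it is in particular the smallest prime dividing $|R|$, so the "moreover" clause of Theorem \ref{dc001} applies and gives $\frac{S}{Z(S, R)} \cong \Z_2$. Part (ii) is handled identically: if $S$ is non-commutative and $\Pr(S, R) = \frac{5}{8}$, then the first assertion of Theorem \ref{dc002} with $p = 2$ yields $2 \mid |R|$, hence $2$ is the least prime divisor of $|R|$, and the "moreover" clause of Theorem \ref{dc002} gives $\frac{S}{Z(S, R)} \cong \Z_2 \times \Z_2$.

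I do not expect a genuine obstacle here, since the substantive content is already contained in Theorems \ref{dc001} and \ref{dc002}; the only thing worth spelling out is the elementary remark that $2$ is necessarily the least prime divisor of any even integer, which is what makes the "moreover" hypotheses free. If one wishes to be scrupulous about applying the theorems with a well-defined value of $p$, one can also note that $p = 2$ is the only prime root of $\frac{2p - 1}{p^2} = \frac{3}{4}$, i.e. of $3p^2 - 8p + 4 = 0$, and of $\frac{p^2 + p - 1}{p^3} = \frac{5}{8}$, i.e. of $5p^3 - 8p^2 - 8p + 8 = (p - 2)(5p^2 + 2p - 4) = 0$; but since the parent theorems are stated for an arbitrary prime $p$, even this check is unnecessary for the argument to go through.
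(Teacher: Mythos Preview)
Your proposal is correct and matches the paper's approach: the corollary is stated immediately after Theorems \ref{dc001} and \ref{dc002} with the preface ``In particular, for $p = 2$,'' and no further proof is given. Your observation that $2$, once it divides $|R|$, is automatically the smallest prime divisor is exactly the point that makes the ``moreover'' hypotheses free; the extra remarks about prime roots of the polynomial equations are unnecessary but harmless.
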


In \cite{dmachale}, MacHale listed five results regarding commuting probability of finite groups. The ring theoretic analogue  of the first result of his list is proved in \cite{BMS}. Here we prove the ring theoretic analogue  of the last result. For this, we   prove the ring theoretic analogue  of  \cite[Theorem 3.9]{erl} from which the result follows. We begin with the following lemma.

\begin{lemma}\label{lemma2}
Let $H$ and $N$ be two subrings of a   non-commutative ring $R$ such that $N$ is an ideal of $R$ and $N \subseteq H$.
Then 
\[
\frac{C_H(x) + N}{N} \subseteq C_{H/N}(x + N) \; \text{for all} \; x \in R.
\]
The equality holds if $N \cap [H, R] = \{0\}$.
\end{lemma}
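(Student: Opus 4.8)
The plan is to verify the claimed set inclusion directly from the definitions, and then analyze when it is an equality by comparing cardinalities.

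First I would establish the inclusion $\frac{C_H(x) + N}{N} \subseteq C_{H/N}(x + N)$. Take a typical element of the left-hand side; it has the form $(c + n) + N = c + N$ with $c \in C_H(x)$ and $n \in N$. Since $c \in H$, we have $c + N \in H/N$, so it makes sense to ask whether it commutes with $x + N$ in $R/N$. Because $c$ commutes with $x$ in $R$, namely $[c,x] = cx - xc = 0$, the additive commutator in the quotient satisfies $[c+N, x+N] = (cx - xc) + N = 0 + N$, so $c + N \in C_{H/N}(x+N)$. This gives the inclusion with no hypothesis beyond $N$ being an ideal contained in $H$ (which is needed so that $H/N$ and the relevant cosets are well-defined).

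Next, for the equality statement under the hypothesis $N \cap [H, R] = \{0\}$, I would count. Since both sides are subgroups of $(H/N, +)$ and one contains the other, it suffices to show they have the same order, equivalently that $|C_{H/N}(x+N)| \leq \frac{|C_H(x) + N|}{|N|} = \frac{|C_H(x)|}{|C_H(x) \cap N|}$. To do this I would consider the map $C_H(x) \to [x, R]$ (or more precisely into $[x,H]$) given by $h \mapsto [x,h]$ — wait, that is zero on $C_H(x)$; instead the right object is the map on the quotient. Let $h + N \in C_{H/N}(x+N)$ with $h \in H$; then $[h,x] \in N$, and also $[h,x] \in [H,R]$, so $[h,x] \in N \cap [H,R] = \{0\}$, forcing $[h,x] = 0$, i.e. $h \in C_H(x)$. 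Hence every coset in $C_{H/N}(x+N)$ already has a representative (indeed all its representatives work) lying in $C_H(x)$, so $C_{H/N}(x+N) \subseteq \frac{C_H(x)+N}{N}$, giving equality. The earlier counting paragraph is then unnecessary; the argument is purely set-theoretic once one uses $N \cap [H,R] = \{0\}$.

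The main obstacle — really the only subtlety — is being careful that $[h,x]$ genuinely lies in $[H,R]$: here $h \in H$ and $x \in R$, so $[h,x] = [x,h]$ up to sign lies in $K(H,R) \subseteq [H,R]$, using the convention in the paper that $[H,R]$ is the subgroup generated by all $[s,r]$ with $s \in H$, $r \in R$ (note $x$ ranges over all of $R$, not just $H$, which is exactly what we need). One should also note that the inclusion $N \subseteq H$ ensures $C_H(x) + N \subseteq H$ so that $\frac{C_H(x)+N}{N}$ is an honest subgroup of $H/N$. No further hypotheses on $R$ are used beyond those in the statement, and the non-commutativity of $R$ is not actually needed for this lemma (it is presumably assumed only because the lemma is used in a context where $R$ is non-commutative).
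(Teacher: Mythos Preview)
Your proof is correct and follows essentially the same route as the paper: a direct verification of the inclusion by taking a coset representative from $C_H(x)$, followed by the reverse inclusion under $N\cap[H,R]=\{0\}$ by observing that any $h\in H$ with $h+N\in C_{H/N}(x+N)$ satisfies $[h,x]\in N\cap[H,R]=\{0\}$ and hence $h\in C_H(x)$. The counting digression in the middle is unnecessary (as you yourself note), so you could tighten the write-up by removing it and the ``wait'' self-correction; what remains is exactly the paper's argument.
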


\begin{proof}
For any element $s \in C_H(x) + N$, where $s = r + n$ for some $r \in C_H(x)$ and $n \in N$, we have $s + N = r + N \in \frac{H}{N}$. Also, 
\[
(s + N)(x + N) = rx + N = xr + N = (x + N)(s + N), 
\]
as $r \in C_H(x)$. This proves the first part.

Let $N \cap [H, R] = \{0\}$ and $y + N \in C_{H/N}(x + N)$. Then $y \in H$ and  $(y + N)(x + N) = (x + N)(y + N)$. This gives $yx - xy \in N \cap [H, R] = \{0\}$ and so $y \in C_H(x)$. Therefore, $y + N \in \frac{C_H(x) + N}{N}$. Hence the equality holds. 
\end{proof}

\begin{theorem}\label{theorem3}
Let $H$ and $N$ be two  subrings of a finite non-commutative ring $R$ such that $N$ is an ideal of $R$ and $N \subseteq H$. Then 
\[
\Pr(H, R) \leq \Pr \left(\frac{H}{N}, \frac{R}{N} \right) \Pr(N).
\]
The equality holds if  $N \cap [H, R] = \{0\}$.
\end{theorem}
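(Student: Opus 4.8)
The plan is to start from the counting formula \eqref{formula-1}, namely $\Pr(H,R) = \frac{1}{|H||R|}\sum_{x\in R}|C_H(x)|$, and to bound each summand $|C_H(x)|$ using Lemma \ref{lemma2}. Since $N\subseteq H$ is an additive subgroup, $|C_H(x)| = |C_H(x)\cap N|\cdot\bigl|\frac{C_H(x)+N}{N}\bigr|$, and $C_H(x)\cap N = C_N(x)$; so Lemma \ref{lemma2} yields
\[
|C_H(x)| = |C_N(x)|\cdot\Bigl|\tfrac{C_H(x)+N}{N}\Bigr| \le |C_N(x)|\cdot|C_{H/N}(x+N)|,
\]
with equality for every $x$ once $N\cap[H,R]=\{0\}$.

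Next I would substitute this into \eqref{formula-1} and group the terms of $\sum_{x\in R}$ according to the coset $x+N\in R/N$, on which the factor $|C_{H/N}(x+N)|$ is constant. This gives
\[
|H||R|\,\Pr(H,R) \le \sum_{\bar x\in R/N}|C_{H/N}(\bar x)|\sum_{x\in\bar x}|C_N(x)|,
\]
so it is enough to prove the pointwise estimate $\sum_{x\in\bar x}|C_N(x)| \le |N|^2\Pr(N)$ for every coset $\bar x$. Indeed, granting this, the right-hand side is at most $|N|^2\Pr(N)\sum_{\bar x\in R/N}|C_{H/N}(\bar x)|$, and applying \eqref{formula-1} to the subring $H/N$ of $R/N$ turns $\sum_{\bar x}|C_{H/N}(\bar x)|$ into $|H/N||R/N|\Pr(H/N,R/N) = \frac{|H||R|}{|N|^2}\Pr(H/N,R/N)$; after dividing by $|H||R|$ this is exactly the claimed inequality.

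The heart of the argument — and the step I expect to be the main obstacle — is the inner estimate, because $C_N(x)$ as $x$ runs over a coset of $N$ has no obvious connection to $\Pr(N)$. Here I would fix a representative $x_0$ of $\bar x$ and use that $n\mapsto[n,\cdot]$ is additive on $N$ to write $\sum_{x\in\bar x}|C_N(x)| = \sum_{a\in N}\bigl|\{\,n\in N : [n,x_0]+[n,a]=0\,\}\bigr|$, and then interchange the order of summation to obtain $\sum_{n\in N}\bigl|\{\,a\in N : [n,a]=-[n,x_0]\,\}\bigr|$. For each fixed $n$ the set $\{a\in N : [n,a]=-[n,x_0]\}$ is either empty or a coset of the additive subgroup $C_N(n)=\{a\in N:[n,a]=0\}$, hence has at most $|C_N(n)|$ elements; summing over $n$ gives $\sum_{n\in N}|C_N(n)| = |N|^2\Pr(N)$, as needed.

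Finally, for the equality clause I would check that the hypothesis $N\cap[H,R]=\{0\}$ collapses every inequality used. Lemma \ref{lemma2} already gives equality in the first step. For the inner estimate, note that for $n\in N$ and $x_0\in R$ we have $[n,x_0]\in N$ (as $N$ is an ideal) and $[n,x_0]\in[H,R]$ (as $n\in N\subseteq H$), so $[n,x_0]=0$; thus $\{a\in N:[n,a]=-[n,x_0]\} = C_N(n)$ and the coset-counting step is an equality as well. (This also forces $[N,N]\subseteq N\cap[H,R]=\{0\}$, i.e.\ $N$ commutative, so $\Pr(N)=1$.) Hence all estimates are equalities and $\Pr(H,R) = \Pr\!\left(\frac{H}{N},\frac{R}{N}\right)\Pr(N)$ in that case.
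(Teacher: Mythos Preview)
Your proof is correct and follows essentially the same route as the paper's: decompose $|C_H(x)|$ via $C_N(x)$ and $\frac{C_H(x)+N}{N}$, apply Lemma~\ref{lemma2}, group by cosets of $N$, and bound the inner sum by $|N|^2\Pr(N)$ via the observation that the relevant set is empty or a coset of $C_N(n)$. The only cosmetic difference is that the paper phrases the inner estimate as $\sum_{y\in S}|C_N(y)|=\sum_{n\in N}|C_R(n)\cap S|$ with $|C_R(n)\cap S|\le|C_N(n)|$, which is your set $\{a\in N:[n,a]=-[n,x_0]\}$ under the shift $a\mapsto x_0+a$; your equality analysis (including the remark that $N$ becomes commutative) is likewise the same argument in commutator language.
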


\begin{proof}
We have that 
\allowdisplaybreaks{
\begin{align*}
|H||R| \Pr(H, R) = &\underset{x \in R}{\sum} |C_H(x)| \\
= &\underset{S \in \frac{R}{N}}{\sum}\underset{y \in S}{\sum}\frac{|C_H(y)|}{|N \cap C_H(y)|} |C_N(y)| \\
= &\underset{S \in \frac{R}{N}}{\sum}\underset{y \in S}{\sum}\frac{|C_H(y) + N|}{|N|} |C_N(y)| \\
\leq &\underset{S \in \frac{R}{N}}{\sum}\underset{y \in S}{\sum}|C_{\frac{H}{N}}(y + N)| |C_N(y)| \;\;(\text{using Lemma \ref{lemma2}}) \\
= &\underset{S \in \frac{R}{N}}{\sum} |C_{\frac{H}{N}}(S)| \underset{y \in S}{\sum}|C_N(y)| \\
= &\underset{S \in \frac{R}{N}}{\sum} |C_{\frac{H}{N}}(S)| \underset{n \in N}{\sum}|C_R(n) \cap S|.  
\end{align*}
} 
Let $a + N = S$ where $a \in R - N$. If $C_R(n) \cap S = \phi$ then $|C_R(n) \cap S| < |C_N(n)|$. If $C_R(n) \cap S \neq \phi$ then there exists $x_0 \in C_R(n) \cap S$ such that $x_0 = a + n_0$ for some $a \in R - N$ and $n_0 \in N$. Therefore $x_0 + N = a + N = S$ and so $S \cap C_R(n) = (x_0 + N) \cap (x_0 + C_R(n)) = x_0 + (N \cap C_R(n)) = x_0 + C_N(n)$. Hence $|S \cap C_R(n)| \leq |C_N(n)|$. This gives
\begin{align*}
|H||R| \Pr(H, R) \leq &\underset{S \in \frac{R}{N}}{\sum} |C_{\frac{H}{N}}(S)| \underset{n \in N}{\sum}|C_N(n)| \\
= & \left|\frac{H}{N} \right| \left|\frac{R}{N} \right| \Pr \left(\frac{H}{N}, \frac{R}{N} \right) |N|^2 \Pr(N) \\
= & |H||R| \Pr \left(\frac{H}{N}, \frac{R}{N} \right) \Pr(N).  
\end{align*}
Hence the inequality follows.

 Let  $N \cap [H, R] = \{0\}$. Then, by Lemma \ref{lemma2}, we have  
 \[
\frac{C_H(x) + N}{N} = C_{H/N}(x + N) \; \text{for all} \; x \in R.
\]
If $S = a + N$ then it can be seen that $a + n \in C_R(n) \cap S$ for all $n \in N$. Therefore, $C_R(n) \cap S \ne \phi$ for all $n \in N$ and for all $S \in G/N$. Thus all the inequalities above become equalities if $N \cap [H, R] = \{0\}$. This completes the proof.
\end{proof}
%\noindent \textbf{Acknowledgement:} The author would like to thank the referee for his/her valuable suggestions to modify the proof of the main theorem.

Putting $H = R$, in Theorem \ref{theorem3},  we get the following corollary, which is analogous to the last result mentioned in \cite{dmachale}.

\begin{corollary}
Let $R$ be a finite non-commutative ring and $N$ be an ideal of $R$, then $\Pr(R) \leq \Pr(R/N) \Pr(N)$. The equality holds if $N \cap [R, R] = \{0\}$.
\end{corollary}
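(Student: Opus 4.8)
The plan is to read this off directly from Theorem~\ref{theorem3} by taking $H = R$. First I would check that the hypotheses of Theorem~\ref{theorem3} are met with this choice: $R$ is a finite non-commutative ring by assumption, $N$ is an ideal of $R$ by assumption, and the containment $N \subseteq H = R$ is automatic. Hence Theorem~\ref{theorem3} applies and yields
\[
\Pr(R, R) \leq \Pr\!\left(\frac{R}{N}, \frac{R}{N}\right) \Pr(N),
\]
with equality whenever $N \cap [R, R] = \{0\}$, since $[H, R] = [R, R]$ when $H = R$.

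Next I would rewrite the two outer relative commuting probabilities as ordinary commuting probabilities. By the observation recorded in the introduction, $\Pr(S, R) = \Pr(R)$ when $S = R$; applying this to $R$ itself gives $\Pr(R, R) = \Pr(R)$, and applying it to the finite ring $R/N$ gives $\Pr(R/N, R/N) = \Pr(R/N)$. (Here I would note that $R/N$ is still a finite ring, and that it is harmless if $R/N$ or $N$ happens to be commutative, since $\Pr(\cdot)$ is defined for every finite ring.) Substituting these identities into the displayed inequality yields exactly $\Pr(R) \leq \Pr(R/N)\Pr(N)$, and the equality clause is inherited verbatim from Theorem~\ref{theorem3}.

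There is essentially no obstacle here: the corollary is a pure specialization, and the only thing to be careful about is the bookkeeping identification $\Pr(R,R)=\Pr(R)$ together with noting that $[R,R]$ is precisely the commutator subgroup $[H,R]$ appearing in the equality condition of Theorem~\ref{theorem3}. If one wanted a self-contained argument instead, one could of course re-run the chain of (in)equalities in the proof of Theorem~\ref{theorem3} with $H=R$, using Lemma~\ref{lemma2} with $H=R$ to get $\frac{C_R(x)+N}{N}\subseteq C_{R/N}(x+N)$ and the coset-counting step for $|S\cap C_R(n)|\le |C_N(n)|$, but invoking Theorem~\ref{theorem3} as a black box is the cleanest route.
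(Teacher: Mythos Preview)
Your proposal is correct and matches the paper's own argument exactly: the paper derives the corollary by putting $H = R$ in Theorem~\ref{theorem3}, which is precisely what you do, together with the identifications $\Pr(R,R)=\Pr(R)$ and $\Pr(R/N,R/N)=\Pr(R/N)$.
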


The following lemma is useful in  proving the next theorem.
\begin{lemma}[Observation 2.1 \cite{BMS}]\label{obs2.1}
Let $R$ be a finite ring. Then the additive group $R/C_R(x)$ is isomorphic to $[x, R]$ for all  $x \in R$.
\end{lemma}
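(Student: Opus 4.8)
The plan is to produce an explicit additive homomorphism from $(R,+)$ onto $[x,R]$ whose kernel is $C_R(x)$, and then to invoke the first isomorphism theorem for groups. Fix $x \in R$ and define $\phi_x \colon R \to R$ by $\phi_x(y) = [x,y] = xy - yx$. Note first that $C_R(x)$ is a subring, hence a subgroup of $(R,+)$, so the quotient group $R/C_R(x)$ is meaningful.

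The first step is to check that $\phi_x$ is a homomorphism of additive groups. For $y_1, y_2 \in R$ the two distributive laws give
\[
\phi_x(y_1 + y_2) = x(y_1 + y_2) - (y_1 + y_2)x = (xy_1 - y_1 x) + (xy_2 - y_2 x) = \phi_x(y_1) + \phi_x(y_2).
\]
(Only distributivity is used here, not associativity of multiplication or finiteness of $R$; finiteness is stated merely for the context of the paper.) The image of $\phi_x$ is by definition the set $\{[x,y] : y \in R\}$, which is exactly $[x,R]$; in particular this set is automatically a subgroup of $(R,+)$, being the homomorphic image of a group, which also justifies the definition of $[x,R]$ recorded in the introduction. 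The kernel is
\[
\ker \phi_x = \{ y \in R : xy - yx = 0 \} = \{ y \in R : xy = yx \} = C_R(x).
\]

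Applying the first isomorphism theorem to $\phi_x$ then yields $R/C_R(x) \cong \phi_x(R) = [x,R]$, which is the assertion. There is no genuine obstacle in this argument; the only thing worth flagging is the observation that the set of ``single'' commutators $[x,y]$ is already closed under addition and negation, which follows the moment $\phi_x$ is recognised as a homomorphism, and which is what makes the statement an equality of groups rather than merely a bijection of sets.
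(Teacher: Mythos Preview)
Your proof is correct and is exactly the standard argument: the commutator-with-$x$ map is an additive homomorphism with kernel $C_R(x)$ and image $[x,R]$, so the first isomorphism theorem gives the claim. The paper itself does not supply a proof of this lemma---it merely quotes it as Observation~2.1 from \cite{BMS}---so there is nothing further to compare.
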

\noindent Therefore, for all $s \in S$ we have 
\begin{equation}\label{eqlb}
|[S, R]| \geq |K(S, R)| \geq |[s, R]| = |R : C_R(s)|
\end{equation} 
\begin{theorem}\label{newlb1}
Let $S$ be a subring of a finite   ring  $R$. Then 
\[
\Pr(S, R) \geq \frac{1}{|K(S, R)|}\left(1 + \frac{|K(S, R)| - 1}{|S : Z(S, R)|} \right).
\]
In particular, if $Z(S, R) \ne S$ then $\Pr(S, R) > \frac{1}{|K(S, R)|}$.
\end{theorem}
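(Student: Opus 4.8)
The plan is to rewrite $\Pr(S,R)$ via the counting formula \eqref{formula-1} together with Lemma \ref{obs2.1}, and then to separate the contribution of the central elements of $S$ from that of the non-central ones. Starting from
\[
\Pr(S,R) = \frac{1}{|S||R|}\sum_{s \in S}|C_R(s)|,
\]
I would use Lemma \ref{obs2.1} to replace $|C_R(s)|$ by $|R|/|R : C_R(s)| = |R|/|[s,R]|$, which gives the convenient form $\Pr(S,R) = \dfrac{1}{|S|}\sum_{s \in S}\dfrac{1}{|[s,R]|}$.

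Next I would split the sum over $S$ into the $|Z(S,R)|$ terms coming from $s \in Z(S,R)$, for which $[s,R] = \{0\}$ so the summand equals $1$, and the remaining $|S| - |Z(S,R)|$ terms coming from $s \in S - Z(S,R)$. For each of the latter, \eqref{eqlb} yields $|[s,R]| \leq |K(S,R)|$, so the summand is at least $1/|K(S,R)|$. This produces the bound
\[
\Pr(S,R) \geq \frac{1}{|S|}\left(|Z(S,R)| + \frac{|S| - |Z(S,R)|}{|K(S,R)|}\right).
\]

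The remaining step is a routine manipulation: dividing the right-hand side term by term by $|S|$ and writing $|S : Z(S,R)| = |S|/|Z(S,R)|$, one verifies the identity
\[
\frac{|Z(S,R)|}{|S|} + \frac{|S| - |Z(S,R)|}{|S||K(S,R)|} = \frac{1}{|K(S,R)|}\left(1 + \frac{|K(S,R)| - 1}{|S : Z(S,R)|}\right),
\]
which is exactly the asserted bound. For the final sentence, if $Z(S,R) \ne S$ then some $s \in S$ is non-central, so $K(S,R)$ contains a non-zero element and $|K(S,R)| \geq 2$, while also $|S : Z(S,R)| \geq 2$; hence the bracketed factor strictly exceeds $1$ and $\Pr(S,R) > 1/|K(S,R)|$.

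I do not expect a genuine obstacle here: the argument is essentially bookkeeping. The only points requiring care are the correct interchange between $|C_R(s)|$, $|R : C_R(s)|$ and $|[s,R]|$ through Lemma \ref{obs2.1}, and checking that the elementary identity above reproduces the stated closed form rather than a slightly different expression.
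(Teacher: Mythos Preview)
Your argument is correct and essentially identical to the paper's: both split the sum $\Pr(S,R)=\frac{1}{|S|}\sum_{s\in S}\frac{1}{|R:C_R(s)|}$ into central and non-central terms and bound the latter via \eqref{eqlb}. The only cosmetic difference is that you pass through $|[s,R]|$ via Lemma~\ref{obs2.1} whereas the paper keeps the index $|R:C_R(s)|$, but these are equal by that very lemma; your treatment of the ``in particular'' clause is also a bit more explicit than the paper's.
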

\begin{proof}
By   \eqref{formula-1}, we have
\begin{align*}
\Pr(S, R) = & \frac{1}{|S|} \underset{s \in S}{\sum}\frac{1}{|R : C_R(s)|}\\
= & \frac{|Z(S, R)|}{|S|}   + \frac{1}{|S|}\underset{s \in S - Z(S, \,R)}{\sum}\frac{1}{|R : C_R(s)|}.
\end{align*}
Now, by \eqref{eqlb}, we have
\begin{align*}
\Pr(S, R) \geq & \frac{|Z(S, R)|}{|S|}   + \frac{1}{|S|} \underset{s \in S - Z(S, R)}{\sum}\frac{1}{|K(S, R)|}\\
= & \frac{|Z(S, R)|}{|S|}   +  \frac{|S| - |Z(S, R)|}{|S||K(S, R)|}
\end{align*}
from which the result follows.
\end{proof}

Since $|[S, R]| \geq |R : C_R(s)|$ for all $s \in S$, we also have the following lower bound.
\begin{theorem}\label{newlb2}
Let $S$ be a subring of a finite   ring  $R$. Then 
\[
\Pr(S, R) \geq \frac{1}{|[S, R]|}\left(1 + \frac{|[S, R]| - 1}{|S : Z(S, R)|} \right).
\]
In particular, if $Z(S, R) \ne S$ then $\Pr(S, R) > \frac{1}{|[S, R]|}$.
\end{theorem}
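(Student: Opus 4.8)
The plan is to mirror the proof of Theorem \ref{newlb1} almost verbatim, replacing the appeal to \eqref{eqlb} with the weaker estimate $|[S,R]| \geq |R : C_R(s)|$ for all $s \in S$, which itself follows from Lemma \ref{obs2.1} together with the obvious containment $[s,R] \subseteq [S,R]$ in $(R,+)$. The starting point is the identity from \eqref{formula-1},
\[
\Pr(S,R) = \frac{1}{|S|} \underset{s \in S}{\sum} \frac{1}{|R : C_R(s)|},
\]
which I would split according to whether $s$ lies in $Z(S,R)$ (where the summand equals $1$) or not.

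First I would write
\[
\Pr(S,R) = \frac{|Z(S,R)|}{|S|} + \frac{1}{|S|} \underset{s \in S - Z(S,R)}{\sum} \frac{1}{|R : C_R(s)|},
\]
and then bound each remaining summand below by $\frac{1}{|[S,R]|}$, using $|R : C_R(s)| = |[s,R]| \leq |[S,R]|$. Summing over the $|S| - |Z(S,R)|$ terms and simplifying gives
\[
\Pr(S,R) \geq \frac{|Z(S,R)|}{|S|} + \frac{|S| - |Z(S,R)|}{|S|\,|[S,R]|},
\]
and factoring out $\frac{1}{|[S,R]|}$ and rewriting $\frac{|S|}{|Z(S,R)|}$ as $|S : Z(S,R)|$ rearranges this into the claimed form
\[
\Pr(S,R) \geq \frac{1}{|[S,R]|}\left(1 + \frac{|[S,R]| - 1}{|S : Z(S,R)|}\right).
\]
The ``in particular'' clause is immediate: if $Z(S,R) \neq S$ then $|S : Z(S,R)| \geq 2 > 1$ (in fact it is finite and strictly greater than $1$), so the parenthetical factor strictly exceeds $1$, hence $\Pr(S,R) > \frac{1}{|[S,R]|}$.

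There is essentially no obstacle here; the only point requiring a word of care is justifying $|[s,R]| \leq |[S,R]|$, i.e. that $[s,R]$ is a subgroup of $[S,R]$ under addition — this holds because $s \in S$ forces $K(\{s\},R) \subseteq K(S,R)$, and $[s,R]$ is by definition the subgroup of $(R,+)$ generated by $\{[s,y] : y \in R\}$, which is contained in the subgroup $[S,R]$ generated by $K(S,R)$. Everything else is the same bookkeeping as in Theorem \ref{newlb1}, just with $|K(S,R)|$ replaced by the (generally larger) quantity $|[S,R]|$.
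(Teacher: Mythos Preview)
Your proposal is correct and matches the paper's approach exactly: the paper simply remarks that since $|[S,R]| \geq |R : C_R(s)|$ for all $s \in S$ (from \eqref{eqlb}), the same computation as in Theorem \ref{newlb1} yields the bound with $|[S,R]|$ in place of $|K(S,R)|$. Your write-up in fact supplies more detail than the paper, which does not spell out the argument at all.
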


Let $p$ be the smallest prime dividing $|R|$. If $[S, R] \ne R$ and $S \ne Z(S, R)$ then it can be seen that
\[
\frac{1}{|[S, R]|}\left(1 + \frac{|[S, R]| - 1}{|S : Z(S, R)|} \right) \geq \frac{|Z(S, R)|}{|S|} + \frac{p(|S| - |Z(S, R)|)}{|S||R|}
\]
with equality  if and only if $|R : [S : R]| = p$. Also,
\[
\frac{1}{|K(S, R)|}\left(1 + \frac{|K(S, R)| - 1}{|S : Z(S, R)|} \right) \geq \frac{1}{|[S, R]|}\left(1 + \frac{|[S, R]| - 1}{|S : Z(S, R)|} \right) 
\]
with equality if and only if $K(S, R) = [S, R]$.
Hence, the lower bound obtained in Theorem \ref{newlb1} is better than the lower bounds obtained in   Corollary \ref{corprbd} and Theorem \ref{newlb2}.

Putting $S = R$, in   Theorem \ref{newlb1} and Theorem \ref{newlb2}, we get the following corollaries.
\begin{corollary}\label{newlb3}
If $R$ is a finite ring then 
\[
\Pr(R) \geq \frac{1}{|K(R, R)|}\left(1 + \frac{|K(R, R)| - 1}{|R : Z(R)|} \right).
\]
In particular, if $R$ is non-commutative then $\Pr(R) > \frac{1}{|K(R, R)|}$.
\end{corollary}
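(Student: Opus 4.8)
The plan is to obtain this corollary as the special case $S = R$ of Theorem \ref{newlb1}, so essentially no new work is required. First I would observe that $R$ is trivially a subring of itself, so Theorem \ref{newlb1} applies with $S = R$. Next I would invoke the identification $Z(R, R) = Z(R)$ recorded in the introduction, which gives $|S : Z(S, R)| = |R : Z(R)|$, together with the obvious equalities $K(S, R) = K(R, R)$ and $\Pr(S, R) = \Pr(R, R) = \Pr(R)$.

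Substituting these into the displayed inequality of Theorem \ref{newlb1} yields
\[
\Pr(R) \geq \frac{1}{|K(R, R)|}\left(1 + \frac{|K(R, R)| - 1}{|R : Z(R)|} \right),
\]
which is the first assertion.

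For the ``in particular'' clause, I would argue that if $R$ is non-commutative then $Z(R) \neq R$, hence $Z(S, R) = Z(R) \neq R = S$; the corresponding refinement in Theorem \ref{newlb1} then gives $\Pr(R) > \frac{1}{|K(R, R)|}$ directly. I do not anticipate any obstacle here: the only thing to check is that the hypotheses of Theorem \ref{newlb1} are met when $S = R$, and this is immediate. (One could alternatively note that the strict inequality follows from the displayed bound together with $|K(R,R)| \geq 2$ and $|R : Z(R)| \geq 1$ in the non-commutative case, but routing through the second half of Theorem \ref{newlb1} is cleaner.)
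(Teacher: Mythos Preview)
Your proposal is correct and matches the paper's approach exactly: the paper obtains this corollary simply by substituting $S = R$ in Theorem~\ref{newlb1}, using $Z(R,R) = Z(R)$ and $\Pr(R,R) = \Pr(R)$, and derives the strict inequality in the non-commutative case from the second clause of that theorem.
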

 
\begin{corollary}\label{newlb4}
If $R$ is a finite ring then 
\[
\Pr(R) \geq \frac{1}{|[R, R]|}\left(1 + \frac{|[R, R]| - 1}{|R : Z(R)|} \right).
\]
In particular, if $R$ is non-commutative then $\Pr(R) > \frac{1}{|[R, R]|}$.
\end{corollary}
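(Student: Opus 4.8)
The plan is to derive Corollary \ref{newlb4} directly from Theorem \ref{newlb2} by specializing $S = R$. First I would observe that when $S = R$ we have $Z(S, R) = Z(R, R) = Z(R)$ by the remark in the introduction, and $[S, R] = [R, R]$ by definition; substituting these into the statement of Theorem \ref{newlb2} yields exactly
\[
\Pr(R) \geq \frac{1}{|[R, R]|}\left(1 + \frac{|[R, R]| - 1}{|R : Z(R)|} \right),
\]
which is the first assertion.

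For the ``in particular'' clause, I would argue as follows. Suppose $R$ is non-commutative. Then $Z(R) \neq R$, so in the notation of Theorem \ref{newlb2} we have $Z(S, R) \neq S$ with $S = R$; hence the second conclusion of that theorem gives $\Pr(R) > \frac{1}{|[R, R]|}$ immediately. Alternatively, one can see this directly from the displayed inequality: since $R$ is non-commutative, $|R : Z(R)| \geq 1$ is finite and $|[R, R]| \geq 2$, so the term $\frac{|[R, R]| - 1}{|R : Z(R)|}$ is strictly positive, forcing the right-hand side to exceed $\frac{1}{|[R, R]|}$.

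There is essentially no obstacle here: the corollary is a routine specialization of Theorem \ref{newlb2}, and the only thing to be careful about is confirming that the hypothesis $Z(S, R) \neq S$ in the ``in particular'' part of Theorem \ref{newlb2} is equivalent, for $S = R$, to $R$ being non-commutative — which it is, since $Z(R, R) = Z(R)$ and $Z(R) = R$ exactly when $R$ is commutative. Thus the proof is a single line: apply Theorem \ref{newlb2} with $S = R$ and use $Z(R, R) = Z(R)$ together with $[R, R]$ being the commutator subgroup of $(R, +)$.
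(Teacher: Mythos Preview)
Your proposal is correct and matches the paper's approach exactly: the paper simply states that Corollary~\ref{newlb4} follows by putting $S = R$ in Theorem~\ref{newlb2}, which is precisely what you do, including the identification $Z(R,R)=Z(R)$ and the observation that $R$ non-commutative is equivalent to $Z(S,R)\neq S$ when $S=R$.
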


We conclude this section, noting  that the lower bound for $\Pr(R)$ obtained in Corollary \ref{newlb3} is better than the lower bound obtained in Corollary \ref{corprbd} and Corollary \ref{newlb4}.
Further, the lower bound for $\Pr(R)$ obtained in Corollary \ref{newlb4}   is an improvement of the lower bound obtained in Lemma 2.3 of \cite{BMS}. Hence, the lower bound for $\Pr(R)$ obtained in Corollary \ref{newlb3} is better than the lower bound obtained in Lemma 2.3 of \cite{BMS}.

\section{$\Z$-isoclinism of rings}
Hall \cite{pH40} introduced the notion of isoclinism between two groups and Lescot \cite{pL95} showed that the commuting probability of two isoclinic finite groups are same. Later on Buckley, MacHale and N$\acute{\rm i}$ sh$\acute{\rm e}$ \cite{BMS} introduced the concept of $\Z$-isoclinism between two rings and showed that the commuting probability of two isoclinic finite rings are same. In this section, we introduce the concept of $\Z$-isoclinism between two pairs of rings and show that relative commuting probability remains invariant under $\Z$-isoclinism of pairs of rings. The group theoretic analogous results can be found in \cite{nY15}.

 Recall that two rings $R_1$ and $R_2$ are said to be $\Z$-isoclinic (see \cite{BMS})  if there exist additive group isomorphisms $\phi : \frac{R_1}{Z(R_1)} \rightarrow \frac{R_2}{Z(R_2)}$   and $\psi : [R_1, R_1] \rightarrow [R_2, R_2]$ such that $\psi ([u, v]) = [u', v']$ whenever $\phi(u + Z(R_1)) = u' + Z(R_2)$ and $\phi(v + Z(R_1)) = v' + Z(R_2)$.  Equivalently, the following diagram commutes 
\vspace{.25cm}
\begin{center}
$
\begin{CD}
   \frac{R_1}{Z(R_1)} \otimes \frac{R_1}{Z(R_1)} @>\phi \otimes \phi>> \frac{R_2}{Z(R_2)} \otimes \frac{R_2}{Z(R_2)}\\
   @VV{a_{R_1}}V  @VV{a_{R_2}}V\\
   [R_1, R_1] @>\psi>> [R_2, R_2],
\end{CD}
$
\end{center}
where $\frac{R_i}{Z(R_i)} \otimes \frac{R_i}{Z(R_i)}$ denotes  tensor product of $\frac{R_i}{Z(R_i)}$ with itself for $i = 1, 2$; $a_{R_i}: \frac{R_i}{Z(R_i)} \otimes \frac{R_i}{Z(R_i)} \to [R_i, R_i]$  are well defined maps given by 
\[
a_{R_i}((x_i + Z(R_i))\otimes (y_i + Z(R_i))) = [x_i, y_i]
\]
 for all $x_i, y_i \in R_i$ and $i = 1, 2$; and  
\[
\phi \otimes \phi((x_1 + Z(R_1)\otimes (y_1 + Z(R_1)) = (x_2 + Z(R_2)\otimes (y_2 + Z(R_2)
\]  
whenever
$\phi(x_1 + Z(R_1)) = x_2 + Z(R_2)$ and $\phi(y_1 + Z(R_1)) = y_2 + Z(R_2)$.
The above diagram commutes means
\[
a_{R_2} \circ (\phi \otimes \phi) = \psi \circ a_{R_1}.
\]
The pair of mappings $(\phi, \psi)$ is called a $\Z$-isoclinism from $R_1$ to $R_2$.
\vspace{.25cm}

Following \cite{nY15}, we  introduce the concept of $\Z$-isoclinism between two pairs of rings in the following definition.

\begin{definition}
Let $R_1$ and $R_2$ be two rings with subrings $S_1$ and $S_2$ respectively. A pair of rings $(S_1, R_1)$ is said to be $\Z$-isoclinic to a pair of rings $(S_2, R_2)$ if there exist additive group isomorphisms $\phi : \frac{R_1}{Z(S_1, R_1)} \rightarrow \frac{R_2}{Z(S_2, R_2)}$ such that $\phi \left(\frac{S_1}{Z(S_1, R_1)}\right) = \frac{S_2}{Z(S_2, R_2)}$; and $\psi : [S_1, R_1] \rightarrow [S_2, R_2]$ such that $\psi ([u, v]) = [u', v']$ whenever $\phi(u + Z(S_1, R_1)) = u' + Z(S_2, R_2)$ and $\phi(v + Z(S_1, R_1)) = v' + Z(S_2, R_2)$. Such pair of mappings $(\phi, \psi)$ is called a $\Z$-isoclinism between $(S_1, R_1)$ and $(S_2, R_2)$.
\end{definition}
Equivalently, a pair of rings $(S_1, R_1)$ is said to be $\Z$-isoclinic to a pair of rings $(S_2, R_2)$ if there exist additive group isomorphisms $\phi : \frac{R_1}{Z(S_1, R_1)} \rightarrow \frac{R_2}{Z(S_2, R_2)}$ such that $\phi \left(\frac{S_1}{Z(S_1, R_1)}\right) = \frac{S_2}{Z(S_2, R_2)}$ and  $\psi : [S_1, R_1] \rightarrow [S_2, R_2]$ such that the following diagram commutes 
\vspace{.25cm}
\begin{center}
$
\begin{CD}
   \frac{S_1}{Z(S_1, R_1)} \otimes \frac{R_1}{Z(S_1, R_1)} @>\phi \otimes \phi>> \frac{S_2}{Z(S_2, R_2)} \otimes \frac{R_2}{Z(S_2, R_2)}\\
   @VV{a_{(S_1, R_1)}}V  @VV{a_{(S_2, R_2)}}V\\
   [S_1, R_1] @>\psi>> [S_2, R_2],
\end{CD}
$
\end{center}
where $\frac{S_i}{Z(S_i, R_i)} \otimes \frac{R_i}{Z(S_i, R_i)}$ denotes tensor product of $\frac{S_i}{Z(S_i, R_i)}$ with $\frac{R_i}{Z(S_i, R_i)}$ for $i = 1, 2$; $a_{(S_i, R_i)}: \frac{S_i}{Z(S_i, R_i)} \otimes \frac{R_i}{Z(S_i, R_i)} \to [S_i, R_i]$  are well defined maps given by 
\[
a_{(S_i, R_i)}((x_i + Z(S_i, R_i))\otimes (y_i + Z(S_i, R_i))) = [x_i, y_i]
\]
 for all $x_i \in S_i, y_i \in R_i$ and  $i = 1, 2$;  and
\[
\phi \otimes \phi((x_1 + Z(S_1, R_1))\otimes (y_1 + Z(S_1, R_1))) = (x_2 + Z(S_2, R_2))\otimes (y_2 + Z(S_2, R_2))
\]  
whenever
$\phi(x_1 + Z(S_1, R_1)) = x_2 + Z(S_2, R_2)$ and $\phi(y_1 + Z(S_1, R_1)) = y_2 + Z(S_2, R_2)$. The above diagram commutes means
\[
a_{(S_2, R_2)} \circ (\phi \otimes \phi) = \psi \circ a_{(S_1, R_1)}.
\]

\noindent For example, consider the non-commutative rings
\[
R_1 =\left\lbrace  \begin{bmatrix}
    a & b\\
    0 & c\\
  \end{bmatrix} : a, b, c \in \Z_p \right\rbrace, 
 R_2 =\left\lbrace  \begin{bmatrix}
    x & y\\
    0 & 0\\
  \end{bmatrix} : x, y \in \Z_p \right\rbrace
\]
and their subrings $S_1 = \left\lbrace  \begin{bmatrix}
    a & 0\\
    0 & a\\
  \end{bmatrix} : a \in \Z_p \right\rbrace$,
 $S_2 =   \left\lbrace  \begin{bmatrix}
    0 & 0\\
    0 & 0\\
  \end{bmatrix}\right\rbrace$ respectively. Then  $R_1/Z(S_1, R_1) \cong \Z_p \times \Z_p \cong R_2/Z(S_2, R_2)$ and $[S_1, R_1] \cong \{0\} \cong [S_2, R_2]$. Also the above diagram commutes since $[S_1, R_1]$ and $[S_2, R_2]$ are trivial. Hence the pairs $(S_1, R_1)$ and $(S_2, R_2)$ are $\Z$-isoclinic. But the pairs  $(\langle 4 \rangle, \Z_8)$ and $(\langle 3 \rangle, \Z_{12})$ are not $\Z$-isoclinic  since $\frac{\Z_8}{Z(\langle 4 \rangle, \Z_8)} \cong \Z_4$ but $\frac{\Z_{12}}{Z(\langle 3 \rangle, \Z_{12})}  \cong \Z_3$. 

The following lemma plays an important role in proving the invariance property of relative commuting probability under  $\Z$-isoclinism between two pairs of rings.

\begin{lemma}\label{isolem}
Let $R_1$ and $R_2$ be two rings with subrings $S_1$ and $S_2$ respectively. If $(\phi, \psi)$ is a $\Z$-isoclinism from  $(S_1, R_1)$ to   $(S_2, R_2)$ then $[s_1, R_1]$ and $[s_2, R_2]$ are isomorphic, whenever $s_1 \in S_1, s_2 \in S_2$ and $\phi(s_1 + Z(S_1, R_1)) = s_2 + Z(S_2, R_2)$.
\end{lemma}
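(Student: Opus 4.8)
The plan is to exhibit an explicit isomorphism between $[s_1, R_1]$ and $[s_2, R_2]$ built from the data of the $\Z$-isoclinism $(\phi, \psi)$. Recall from Lemma \ref{obs2.1} that $[s_i, R_i] \cong R_i/C_{R_i}(s_i)$ as additive groups. Since the map $\psi$ is already an isomorphism from $[S_1, R_1]$ onto $[S_2, R_2]$ and $[s_1, R_1] \subseteq [S_1, R_1]$, $[s_2, R_2] \subseteq [S_2, R_2]$, the natural candidate is to show that $\psi$ restricts to a bijection of $[s_1, R_1]$ onto $[s_2, R_2]$. So the first step is to fix $s_1 \in S_1$, $s_2 \in S_2$ with $\phi(s_1 + Z(S_1, R_1)) = s_2 + Z(S_2, R_2)$, take an arbitrary generator $[s_1, v]$ of $[s_1, R_1]$ with $v \in R_1$, and choose $v' \in R_2$ with $\phi(v + Z(S_1, R_1)) = v' + Z(S_2, R_2)$ (possible since $\phi$ is onto). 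By the defining property of the $\Z$-isoclinism, $\psi([s_1, v]) = [s_2, v'] \in [s_2, R_2]$. Hence $\psi\bigl([s_1, R_1]\bigr) \subseteq [s_2, R_2]$.

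Next I would prove the reverse containment. Given a generator $[s_2, w']$ of $[s_2, R_2]$ with $w' \in R_2$, pick $w \in R_1$ with $\phi(w + Z(S_1, R_1)) = w' + Z(S_2, R_2)$; then $\psi([s_1, w]) = [s_2, w']$, so $[s_2, w'] \in \psi\bigl([s_1, R_1]\bigr)$, giving $[s_2, R_2] \subseteq \psi\bigl([s_1, R_1]\bigr)$. Combined with the previous step, $\psi$ maps $[s_1, R_1]$ onto $[s_2, R_2]$. Since $\psi$ is injective on all of $[S_1, R_1]$, its restriction to the subgroup $[s_1, R_1]$ is an injective additive homomorphism with image $[s_2, R_2]$, hence an isomorphism $[s_1, R_1] \cong [s_2, R_2]$.

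One point that needs a little care, and which I expect to be the main obstacle, is justifying that $\psi$ is well defined and additive on the relevant subgroups — i.e. that the value $\psi([s_1, v])$ does not depend on the choice of representative $v$ (two choices $v', v''$ of a $\phi$-preimage differ by an element of $Z(S_2, R_2)$, so $[s_2, v'] = [s_2, v'']$), and that $\psi$ genuinely sends the additive subgroup $[s_1, R_1]$ generated by the commutators $[s_1, v]$ into a subgroup. This is essentially repackaging the commutativity of the defining diagram: the map $a_{(S_i, R_i)}$ is bilinear on the tensor product, $\phi \otimes \phi$ respects the tensor structure, and $\psi$ is defined so that $a_{(S_2, R_2)} \circ (\phi \otimes \phi) = \psi \circ a_{(S_1, R_1)}$; restricting the first tensor factor to the line spanned by $s_1 + Z(S_1, R_1)$ and tracking the image yields exactly the claimed restriction of $\psi$. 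Apart from this bookkeeping the argument is a direct diagram chase, so I would present it concisely, perhaps even invoking Lemma \ref{obs2.1} only as a sanity check rather than as a necessary ingredient, since the restriction-of-$\psi$ argument already delivers the isomorphism outright.
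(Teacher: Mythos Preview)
Your approach is correct and essentially identical to the paper's: both restrict $\psi$ to $[s_1,R_1]$ and use the defining compatibility $\psi([s_1,v])=[s_2,v']$ (for $\phi(v+Z(S_1,R_1))=v'+Z(S_2,R_2)$) to see that this restriction is a bijection onto $[s_2,R_2]$. One small simplification: by the paper's conventions $[s_1,R_1]=\{[s_1,v]:v\in R_1\}$ is already a subgroup (the map $v\mapsto[s_1,v]$ is additive), so your ``generator'' bookkeeping and the worry about $\psi$ preserving the generated subgroup are unnecessary.
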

\begin{proof}
If $(\phi, \psi)$ is a $\Z$-isoclinism from  $(S_1, R_1)$ to   $(S_2, R_2)$ then $\phi : \frac{R_1}{Z(S_1, R_1)} \rightarrow \frac{R_2}{Z(S_2, R_2)}$ and $\psi : [S_1, R_1] \rightarrow [S_2, R_2]$ are additive group isomorphisms such that $\phi \left(\frac{S_1}{Z(S_1, R_1)}\right) = \frac{S_2}{Z(S_2, R_2)}$. Let $s_1 \in S_1$ and $\psi'$ denote the restriction of $\psi$ on $[s_1, R_1]$. Then $\psi'$ is an injective homomorphism, since $\psi$ is an isomorphism. Let $s_2 \in S_2$ such that $\phi(s_1 + Z(S_1, R_1)) = s_2 + Z(S_2, R_2)$.
We shall show that $\psi'$ is the required isomorphism between $[s_1, R_1]$ and $[s_2, R_2]$. For this it is sufficient to show that $\psi'$ is surjective. Let $[s_2, r_2] \in [s_2, R_2]$ for some $r_2\in R_2$. Since    
$\phi : \frac{R_1}{Z(S_1, R_1)} \rightarrow \frac{R_2}{Z(S_2, R_2)}$ is an isomorphism, there exists an element $r_1 \in R_1$ such that $\phi(r_1 + Z(S_1, R_1)) = r_2 + Z(S_2, R_2)$. 
%We shall show that 
%\[
%\psi'([s_1, r_1]) = [s_2, r_2].
%\] 
Since $(\phi, \psi)$ is a $\Z$-isoclinism,  we have 
\begin{align*}
\psi'([s_1, r_1]) = & \psi([s_1, r_1]) = \psi \circ a_{(S_1, R_1)} ((s_1 + Z(S_1, R_1))\otimes (r_1 + Z(S_1, R_1))) \\
= & a_{(S_2, R_2)}\circ(\phi \otimes \phi)((s_1 + Z(S_1, R_1))\otimes (r_1 + Z(S_1, R_1))) \\
= & a_{(S_2, R_2)}((s_2 + Z(S_2, R_2))\otimes (r_2 + Z(S_2, R_2)))\\
= & [s_2, r_2].
\end{align*}
This shows that $\psi'$ is  surjective. Hence the lemma follows.
\end{proof}
 
Now, we prove the main theorem of this section. 
\begin{theorem}
Let $R_1$ and $R_2$ be two rings with subrings $S_1$ and $S_2$ respectively. If $(\phi, \psi)$ is a $\Z$-isoclinism from   $(S_1, R_1)$ to   $(S_2, R_2)$ then 
\[
\Pr(S_1, R_1) = \Pr(S_2, R_2).
\]
\end{theorem}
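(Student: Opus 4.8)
The plan is to compute $\Pr(S_i, R_i)$ by organizing the sum in \eqref{formula-1} according to cosets of the central subring $Z(S_i, R_i)$, and then to match terms across the isoclinism using Lemma \ref{isolem}. First I would start from the identity
\[
\Pr(S_i, R_i) = \frac{1}{|S_i||R_i|} \underset{s \in S_i}{\sum}|C_{R_i}(s)|,
\]
and use Lemma \ref{obs2.1} to rewrite $|C_{R_i}(s)| = |R_i|/|[s, R_i]|$, so that
\[
\Pr(S_i, R_i) = \frac{1}{|S_i|} \underset{s \in S_i}{\sum}\frac{1}{|[s, R_i]|}.
\]
The key observation is that the summand $|[s, R_i]|$ depends only on the coset $s + Z(S_i, R_i)$: indeed, if $z \in Z(S_i, R_i)$ then $[s + z, r] = [s, r]$ for all $r \in R_i$, so $[s + z, R_i] = [s, R_i]$. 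Hence the sum collapses to a sum over the finite quotient group $\frac{S_i}{Z(S_i, R_i)}$, each coset contributing $|Z(S_i, R_i)|$ copies of the same value:
\[
\Pr(S_i, R_i) = \frac{|Z(S_i, R_i)|}{|S_i|} \underset{\bar s \in \frac{S_i}{Z(S_i, R_i)}}{\sum}\frac{1}{|[s, R_i]|} = \frac{1}{|S_i : Z(S_i, R_i)|} \underset{\bar s \in \frac{S_i}{Z(S_i, R_i)}}{\sum}\frac{1}{|[s, R_i]|},
\]
where $s$ is any representative of $\bar s$.

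Next I would invoke the isoclinism data. The isomorphism $\phi : \frac{R_1}{Z(S_1, R_1)} \to \frac{R_2}{Z(S_2, R_2)}$ restricts to a bijection $\frac{S_1}{Z(S_1, R_1)} \to \frac{S_2}{Z(S_2, R_2)}$ by the defining condition $\phi\left(\frac{S_1}{Z(S_1, R_1)}\right) = \frac{S_2}{Z(S_2, R_2)}$. This bijection lets me re-index the sum for $R_2$ over cosets of $S_1$: writing $\bar s_2 = \phi(\bar s_1)$ and picking representatives $s_1, s_2$ with $\phi(s_1 + Z(S_1, R_1)) = s_2 + Z(S_2, R_2)$, Lemma \ref{isolem} gives $[s_1, R_1] \cong [s_2, R_2]$ and in particular $|[s_1, R_1]| = |[s_2, R_2]|$. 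Therefore the two sums
\[
\underset{\bar s_1 \in \frac{S_1}{Z(S_1, R_1)}}{\sum}\frac{1}{|[s_1, R_1]|} \quad \text{and} \quad \underset{\bar s_2 \in \frac{S_2}{Z(S_2, R_2)}}{\sum}\frac{1}{|[s_2, R_2]|}
\]
are equal term by term under the bijection $\phi$. Since $\phi$ is an isomorphism, $|S_1 : Z(S_1, R_1)| = \left|\frac{S_1}{Z(S_1, R_1)}\right| = \left|\frac{S_2}{Z(S_2, R_2)}\right| = |S_2 : Z(S_2, R_2)|$, so the prefactors $\frac{1}{|S_i : Z(S_i, R_i)|}$ agree as well. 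Combining, $\Pr(S_1, R_1) = \Pr(S_2, R_2)$.

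The main obstacle is ensuring that the summand really is constant on cosets of $Z(S_i, R_i)$ — this is what makes the reduction to the finite quotient legitimate and is the crux of why the quantities $Z(S_i, R_i)$ (rather than $Z(S_i)$) appear in the definition of $\Z$-isoclinism of pairs. I should also be a little careful that $[s, R_i]$ is genuinely a well-defined subgroup of $(R_i, +)$, which is guaranteed by Lemma \ref{obs2.1} (it is the isomorphic image of $R_i/C_{R_i}(s)$), and that Lemma \ref{isolem} applies to \emph{every} $\bar s_1$ in the quotient, not just one — but since $s_1$ ranges over all of $S_1$ this is exactly the content of that lemma. Everything else is bookkeeping with the counting formula \eqref{formula-1} and standard index arithmetic, so no genuinely hard estimate is involved.
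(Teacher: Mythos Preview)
Your proof is correct and follows essentially the same route as the paper: both rewrite $\Pr(S_i,R_i)$ via \eqref{formula-1} and Lemma~\ref{obs2.1} as $\frac{1}{|S_i:Z(S_i,R_i)|}\sum_{\bar s}\frac{1}{|[s,R_i]|}$, then use the bijection $\phi$ on the quotients together with Lemma~\ref{isolem} to match indices and summands. If anything, you are more explicit than the paper in justifying why the summand is constant on cosets of $Z(S_i,R_i)$.
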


\begin{proof}
 By \eqref{formula-1} and Lemma \ref{obs2.1}, we have
% observation 2.2 of \cite{BMS}, we have
\[
\Pr(S_1, R_1) = \frac{1}{|S_1 : Z(S_1, R_1)|} \underset{s_1 + Z(S_1, R_1) \in \frac{S_1}{Z(S_1, R_1)}}{\sum}\frac{1}{|[s_1, R_1]|} 
\]
and 
\[
\Pr(S_2, R_2) = \frac{1}{|S_2 : Z(S_2, R_2)|} \underset{s_2 + Z(S_2, R_2) \in \frac{S_2}{Z(S_2, R_2)}}{\sum}\frac{1}{|[s_2, R_2]|}.
\]
If $(\phi, \psi)$ is a $\Z$-isoclinism from   $(S_1, R_1)$ to   $(S_2, R_2)$ then $|S_1 : Z(S_1, R_1)| = |S_2 : Z(S_2, R_2)|$. Also, by Lemma \ref{isolem}, we have 
\[
\underset{s_1 + Z(S_1, R_1) \in \frac{S_1}{Z(S_1, R_1)}}{\sum}\frac{1}{|[s_1, R_1]|} = \underset{s_2 + Z(S_2, R_2) \in \frac{S_2}{Z(S_2, R_2)}}{\sum}\frac{1}{|[s_2, R_2]|}.
\]
Hence the theorem follows.    
\end{proof}

%\begin{definition}
%We define, for $r \in R$,
%\[
%d_r(R) = \dfrac{|\{(r_1, r_2) \in R \times R : [r_1, r_2] = r\}|}{R \times R}.
%\]
%\end{definition} 

%\begin{thm}
%Let $R$ and $S$ be two rings and $(\phi, \psi)$ be an $\Z$-isoclinism from $R$ to $S$. If $r \in [R, R]$ then $d_r(R) = d_{\psi(r)}(S)$.
%\end{thm}

%\begin{proof}
%Given, $\phi : \dfrac{R}{Z(R)} \rightarrow \dfrac{S}{Z(S)}$ and $\psi : [R, R] \rightarrow [S, S]$ are two additive group 
%isomorphisms such that $\psi([r_1, r_2]) = [s_1, s_2]$ whenever $\phi(r_1 + Z(R)) = s_1 + Z(S)$ and $\phi(r_2 + Z(R)) = s_2 + Z(S)$. For any given $r \in [R, R]$, we consider the set 
%\[
%S_r = \{r_1 + Z(R) \otimes r_2 + Z(R) \in \frac{R}{Z(R)} \otimes \frac{R}{Z(R)} : [r_1, r_2] = r\}
%\]
%and
%\[
%S_{\psi(r)} = \{s_1 + Z(S) \otimes s_2 + Z(S) \in \frac{S}%{Z(S)} \otimes \frac{S}{Z(S)} : [s_1, s_2] = \psi(r) \}.
%\]
%Next, we will show that $d_r(R) = \dfrac{|S_r|}{|\frac{R}%{Z(R)}||\frac{R}{Z(R)}|}$. It is clear that $|S_r| = |S_{\psi(r)}|$. Now we try to prove that 
%$
%|Z(R)|^2 |S_r| = |\{(r_1, r_2) \in R \times R : [r_1, r_2] = r\}|.
%$  
%Let $X = \{(r_1, r_2) \in R \times R : [r_1, r_2] = r\}$. We define a map $h : X \rightarrow S_r$ by $h((r_1, r_2)) = (r_1 + Z(R)) \otimes (r_2 + Z(R))$. Clearly $h$ is well defined and onto. Therefore $X = \underset{(r_i + Z(R)) \otimes (r_j + Z(R)) \in S_r}{\sqcup} h^{-1}((r_i + Z(R)) \otimes (r_j + Z(R)))$. We then have that 
%\begin{align*}
%h^{-1}((r_i + Z(R)) \otimes (r_j + Z(R))) = \{(a, b) \in X : (a + Z(R), )\}
%\end{align*}

%\end{proof}

\end{document}